\documentclass{siamart171218}
\usepackage{multirow}
\usepackage{braket,amsfonts,amsopn,bm}
\usepackage{graphicx,epstopdf}
\usepackage{cite}
\usepackage{array}
\usepackage[linesnumbered,ruled,vlined,algo2e]{algorithm2e}
\usepackage{mathrsfs}

\usepackage{amssymb,amsfonts,amsmath,latexsym,epsfig,euscript,color}
\usepackage[utf8]{inputenc}
\title{The short-term rational Lanczos method and applications \thanks{This version dated
\today.}}
\author{Davide Palitta\thanks{Research Group Computational Methods in Systems and Control Theory (CSC), 
	Max Planck Institute for Dynamics of Complex Technical Systems, 
	Sandtorstr.\ 1, 39106 Magdeburg, Germany, 
	({\tt palitta@mpi-magdeburg.mpg.de})} 
	\and Stefano Pozza\thanks{Charles University, Faculty of Mathematics and Physics, 
Sokolovsk\'a 83, 186 75 Praha 8, Czech Republic; associated member of ISTI-CNR, Pisa, Italy ({\tt pozza@karlin.mff.cuni.cz})}	
	\and Valeria Simoncini\thanks{Dipartimento 
di Matematica and AM$^2$,
Alma Mater Studiorum - Universit\`a di Bologna, Piazza di Porta S. Donato, 5, I-40127 Bologna, Italy;
and also IMATI-CNR, Pavia, Italy ({\tt valeria.simoncini@unibo.it}).}}

\newcommand{\RR}{\mathbb R}
\newcommand{\CC}{\mathbb C}

\newtheorem{prop}[theorem]{Proposition}

\newtheorem{example}[theorem]{Example}
\newtheorem{remark}[theorem]{Remark}

\newcounter{mymac@matlab}
\setcounter{mymac@matlab}{0}
\newcommand{\MATLAB}{{\sc matlab}%
  \ifnum\value{mymac@matlab}<1%
  \textsuperscript{\textregistered}%
  \setcounter{mymac@matlab}{1}%
  \fi%
}

\def\b{\boldsymbol}
\DeclareMathOperator*{\argmin}{arg\,min}

\begin{document}
\maketitle

\begin{abstract}
Rational Krylov subspaces have become a reference tool in dimension reduction procedures 
for several application problems.
When data matrices are symmetric, a short-term recurrence can be used to generate an associated orthonormal
basis. In the past this procedure was abandoned because it requires  
twice the number of linear system solves per iteration {\color{black}compared}
 with the classical long-term method.
We propose an implementation that allows one to obtain {\color{black} the} rational subspace
{\color{black} reduced} matrices 
{\color{black} at lower overall computational costs than proposed in the literature by
also conveniently combining the two system solves.}
{\color{black}Several applications are discussed 
where the short-term recurrence feature can be exploited to avoid storing the whole
orthonormal basis. We illustrate the advantages of the proposed procedure with
several examples.}
\end{abstract}

\section{Introduction}
Given a symmetric matrix $A\in{\mathbb R}^{n\times n}$ and a unit norm vector $v\in{\mathbb R}^{n}$,
we are interested in analyzing the algebraic recurrence that generates the rational Krylov subspace
\begin{equation}\label{eqn:RKS}
{\cal K}_m(A,v,\b{\xi}_m) =
	{\rm span}\left\{v, (I - \xi_1^{-1} A )^{-1} v, 
	\ldots, \prod_{j=1}^{m-1} (I - \xi_j^{-1} A )^{-1} v\right\},
\end{equation}
and in the applicability of the computed quantities; here 
$\b{\xi}_m=[\xi_1, \ldots, \xi_{m-1}]$ with $\xi_i\ne 0$ are 
such that $I - \xi_i^{-1} A$ is nonsingular for $i=1, \ldots, m-1$.
%
%
In the considered applications 
$A$ is definite (either positive or negative), for which strong theoretical 
arguments for the selection of the $\xi_i$ have been discussed, 
see~{\color{black}\cite{Druskin2010,Guettel2013}} and references therein.
{\color{black}In our context, the corresponding theoretical setting suggests 
that the eigenvalues of $A$ and $\xi_i$ have opposite sign, an hypothesis that
we will assume throughout.}
By relying on efficient sparse solvers for
linear systems, rational Krylov subspaces have become a major tool in a variety
of application problems, including eigenvalue approximation, 
dynamical system reduction, matrix equation solution,
and matrix function and bilinear form evaluations 
\cite{Gugercin2008,BecRei09,DruKniZas09,DruKniSim11,GalGriVDo96,GuettelPhD.10,Ruhe1984}.
An orthonormal basis $\{q_1, \ldots, q_m\}$
for ${\cal K}_m(A,v,\b{\xi}_m)$ can be determined by using the Gram-Schmidt procedure as $q_1=v$ and
$$
\hat q_{j+1} := \left(I - \xi_j^{-1} A\right )^{-1}  q_j, \qquad
h_{j+1,j} q_{j+1} = \hat q_{j+1} - Q_j h_j,
$$
where $h_j = Q_j^T \hat q_{j+1}$, $Q_j=[q_1, \ldots, q_j]$
and $h_{j+1,j}$ is the normalization factor for $q_{j+1}$ for $j=1, \ldots, m$ \cite[Eq.~(4.15)]{Ruhe1984}.
This can be viewed as a rational variant of the Arnoldi iteration.
The parameters ({\color{black}or shifts}) $\b{\xi}_m=[\xi_1, \ldots, \xi_{m-1}]$ can be computed a-priori or determined adaptively
as the space grows. The vector $h_j$ contains the orthogonalization coefficients, so that $Q_j$
has orthonormal columns for $j=1, \ldots, m$.

For $A$ symmetric,
in \cite{Deckers.Bultheel.07} a short-term recurrence was introduced to generate
an orthonormal basis of rational functions associated with ${\cal K}_m(A,v,\b{\xi}_m)$, from which
a short-term 
recurrence can be derived for the basis $\{q_1, \ldots, q_m\}$ \cite{GuettelPhD.10}.

{\color{black}
The short-term recurrence yielding an orthonormal basis of 
the {\em polynomial} Krylov subspace generated by a symmetric $A$}
 was introduced by Lanczos \cite{Lanczos1952}, {\color{black}and it is}
 thus referred to as the Lanczos iteration.
This recurrence allows one to store few {\color{black}$n$-dimensional} vectors, leading to major savings in various
approximation problems where the whole basis is otherwise not needed.
Thanks to the work in
\cite{Deckers.Bultheel.07,GuettelPhD.10}, similar advantages can be envisioned 
in the rational case. 
{\color{black} 
G\"uttel in \cite[p.~46]{GuettelPhD.10} says:
{\it ``Note that in each iteration of Algorithm 2 two linear systems with $I - A/\xi_j$
need to be solved [...]. Hence, this algorithm is
in general not competitive with the rational Arnoldi algorithm if the poles $\xi_j$ vary often.
Moreover, we will make explicit use of the orthogonality of the rational Krylov basis $V_{m+1}$
when computing Rayleigh–Ritz approximations for $f(A)b$ (see Chapter 6). In this case
full orthogonalization of $V_{m+1}$ is required anyway and one cannot take advantage of the
short recurrence''.}
In spite of the elegant derivation, these considerations led G\"uttel
 to discard the {\color{black} short-term iteration} in his application
setting.}  

We claim that in spite of the extra cost per iteration, the rational short-term recurrence 
{\color{black} does} provide an appealing framework for a variety of approximation problems.
Our contribution is two-fold. First, we propose a new implementation of the short-term recurrence that:
i) alleviates the computational costs associated with the two solves by combining them into a single
linear system solve with multiple right-hand sides; ii) derives the entries of the
reduced matrix $J_m :=Q_m^T A Q_m$ as the iterations proceed, without 
{\color{black} using $Q_m$ or explicitly solving $m\times m$ linear systems}.
Second, we illustrate the advantages of the obtained implementation in the numerical treatment
of several application problems
{\color{black}that do not require the whole basis matrix $Q_m=[q_1, \ldots, q_m]$;}
these include the approximation of quadratic and bilinear
forms in general, and quantities of interest in control, such as
the estimation of the $\mathcal{H}_2$-norm of (parametric) linear time-invariant systems and of the optimal feedback control 
function.
{\color{black} We will refer to this implementation as the $Q_m$-less computation.}

We also start a discussion on the behavior of the obtained recurrence in finite precision arithmetic.
Our matrix relations and experimental evidence seem to suggest that the short-term iteration is affected
by round-off error accumulations similar to those of the classical Lanczos method. A deeper analysis of this
crucial aspect deserves a dedicated research, it will thus be postponed to future work.

The synopsis of the paper is as follows.  In section~\ref{The short-term rational Krylov iteration} 
we revisit the rational Lanczos iteration. An efficient, basis-free procedure 
to compute the matrix $J_m=Q_m^TAQ_m$ is derived in section~\ref{On the computation of J},
while in section~\ref{The overall procedure} the novel implementation of the rational Lanczos method 
is illustrated. 
A panel of applications where the rational Lanczos method can be successfully employed 
is presented in section~\ref{Applications}. These include numerical approximations of 
quadratic and bilinear forms (section~\ref{sec:bilinear}), matrix function trace 
estimation (section~\ref{sec:trace}), $\mathcal{H}_2$-norm computation for 
LTI systems (section~\ref{H2 norm computation}), 
and LQR feedback control approximations (section~\ref{LQR feedback control}). 
In section~\ref{sec:fpa} some preliminary remarks on the behavior of the rational 
Lanczos method in finite precision arithmetic are reported. 
Our conclusions are given in section~\ref{Conclusion}, while in the 
appendix  the {\em block} rational Lanczos algorithm is given.

\section{The short-term rational Krylov iteration}\label{The short-term rational Krylov iteration}
Based on the thorough analysis of orthogonal rational functions in \cite{Bultheeletal03},
the authors of \cite[Th.~4.2]{Deckers.Bultheel.07} developed a three-term recurrence 
relation to generate a sequence of
orthogonal rational functions associated with the rational Krylov subspace (\ref{eqn:RKS}); see also \cite{Deckers.Bultheel.07}.
This elegant construction was further {\color{black} developed} in
\cite[Section 5.2]{GuettelPhD.10}, leading to the following vector recurrence for $j\geq1$,
\begin{eqnarray}\label{eq:three_term}
	\beta_j\left(I-\xi_j^{-1} A\right) q_{j+1} 
	= A q_j - \alpha_j \left (I- \xi_{j-1}^{-1} A\right ) q_j 
	- \beta_{j-1} \left (I- \xi_{j-2}^{-1} A\right ) q_{j-1},
\end{eqnarray}
where, with the usual convention that $1/\infty=0$, $\xi_{-1}:=\infty$, 
$\xi_{0}:=\infty$, $\beta_0=0$, $q_0=0$, and $q_1=v/\|v\|$. By setting 
$$
r := (I-\xi_j^{-1} A)^{-1}( Aq_j +
\beta_{j-1} \xi_{j-2}^{-1} Aq_{j-1}) - \beta_{j-1}q_{j-1}, \,\quad
s :=(I-\xi_j^{-1} A)^{-1}(I-\xi_{j-1}^{-1} A) q_j,
$$
 the coefficients $\alpha_j$ and $\beta_j$ 
are computed as $\alpha_j = (r^T q_j)/(s^Tq_j)$, and $\beta_j = \|r - \alpha_j s\|$. A 
simple rearrangement of the terms leads to the following more compact notation,
$$
A [ q_{j-1}, q_j, q_{j+1} ] 
\begin{bmatrix} 
\frac{\beta_{j-1}}{\xi_{j-2}} \\ \frac{\alpha_j}{\xi_{j-1}} + 1 \\ \frac{\beta_j}{\xi_j}
 \end{bmatrix}
=
[ q_{j-1}, q_j, q_{j+1} ] \begin{bmatrix} \beta_{j-1} \\ \alpha_j \\ \beta_j \end{bmatrix},
$$
which, after $m$ iterations gives the Arnoldi-like relation
\begin{equation}\label{eqn:Q_arnoldi}
A Q_{m+1} \underline{K}_m = Q_{m+1} \underline{H}_m, \quad {\rm with}\quad
\underline{K}_m = \underline{I}_m + D_m^{-1} \underline{H}_m\in{\mathbb R}^{(m+1)\times m} ,
\end{equation}
where $\underline{I}_m$ is the $(m+1)\times m$ identity matrix and 
$D_m={\rm diag}(\xi_0, \ldots, \xi_{m})$. Hence 
{\footnotesize
$$
\underline{K}_m =
\begin{bmatrix}
1 & 0 &    &    \vspace{3pt}  \\
\frac{\beta_1}{\xi_1} & 1\hspace{-2pt}+\hspace{-2pt}\frac{\alpha_2}{\xi_1} & \frac{\beta_2}{\xi_1}    \\
  & \frac{\beta_2}{\xi_2}  &1\hspace{-2pt}+\hspace{-2pt}\frac{\alpha_3}{\xi_2}  &  \hspace{+2pt}\ddots & \\
	& & \hspace{-2pt}\ddots  & \ddots  & \frac{\beta_{m-1}}{\xi_{m-2}} \vspace{4pt} \\
    & &       & \frac{\beta_{m-1}}{\xi_{m-1}}  & 1\hspace{-2pt}+\hspace{-2pt}\frac{\alpha_m}{\xi_{m-1}} \vspace{4pt} \\
	& & & & \frac{\beta_m}{\xi_m} \vspace{4pt}
\end{bmatrix}, 
\quad
\underline{H}_m =
\begin{bmatrix}
\alpha_1 \vspace{3pt} & \beta_1 &   &    \\
\beta_1  & \alpha_2& \beta_2 &    \\
& \beta_2 & \alpha_3 &  \ddots  \\
 & & \ddots & \ddots & \beta_{m-1} \vspace{4pt} \\
& & & \beta_{m-1} & \alpha_m \vspace{4pt} \\
& & & & \beta_m \vspace{4pt}
\end{bmatrix} .
$$
}
The relation in (\ref{eqn:Q_arnoldi}) can also be written as
\begin{equation}\label{eq:rat:lanc:mtx}
	A Q_{m} K_m = Q_{m} H_m + \beta_m \left(I- \xi_m^{-1} A\right ) q_{m+1} e_m^T ,
\end{equation}
with $K_m$ and $H_m$ the leading  $m\times m$ upper parts of $\underline{K}_m$ and $\underline{H}_m$,
respectively. Here and in the following, $e_j$ denotes the $j$th column of the identity matrix,
whose dimension is clear from the context.
Hence, we resume the standard form for the rational Krylov iteration \cite{Ruhe1984}. 
{\color{black} This means that in exact arithmetic
the rational Lanczos and rational Arnoldi recurrences
compute the same basis, provided the same set of shifts is employed in the basis construction.}

{\color{black}Thanks} to the irreducibility of $\underline{H}_m$, {\color{black} it follows from  (\ref{eq:rat:lanc:mtx})} 
that {\color{black}if}
 the matrix $K_m$ is singular, {\color{black} then} $[Q_m, (I-\xi_m^{-1} A)q_{m+1}]$ is not full rank, 
which occurs if and only if the
subspace $\mathcal{K}_m(A,v, \b{\xi}_m)$ is $A$-invariant, i.e., 
$A\mathcal{K}_m(A,v, \b{\xi}_m) = \mathcal{K}_m(A,v, \b{\xi}_m)$. In the $A$-invariant case we have a 
lucky termination of the algorithm.
Hereafter 
we thus always assume $K_m$ to be invertible.
Like for the standard Lanczos procedure, the matrix $J_m=Q_m^TAQ_m$ represents
the projection and restriction of $A$ onto the range space of $Q_m$. However, while
in the (standard) Arnoldi process this matrix is tridiagonal, the matrix $J_m$ is generally full.
A detailed analysis of the structure and decay properties of the entries of $J_m$ 
can be found in \cite{Pozza.Simoncini.20tr}; see also \cite{PraRei14}.
The matrix $J_m$ appears in projection methods for solving problems 
such as linear and quadratic matrix equations and matrix functions evaluations.
{\color{black}Classical implementations of the rational Lanczos recurrence
rely on the whole matrix $Q_m$.
In the following we show that this can be avoided, leading to memory savings in a variety of application problems.}

\subsection{On the computation of $J_j=Q_j^TAQ_j$}\label{On the computation of J}
In this section
 we derive a recurrence for computing {\color{black} the small dimensional matrix $J_j=Q_j^TAQ_j$
without using $Q_j$ or explicitly solving small size linear systems with  $K_j$ at each iteration $j$.}%

Setting $J_{j+1}=Q_{j+1}^T A Q_{j+1}$ and multiplying 
(\ref{eq:rat:lanc:mtx}) by $Q_{j+1}^T$ from the left, we obtain
\begin{equation}\label{eqn:Jrect}
J_{j+1} \underline{K}_j = \underline{H}_j. 
\end{equation}
Considering the first $j$ rows we can write
$J_{j} K_j + w_j e_j^T = {H}_j$, with $w_j = Q_j^T A q_{j+1} \frac {\beta_j}{\xi_j}$,
that is, except for the last column, the matrix $J_{j}  {K}_j$ is tridiagonal, and
\begin{equation}\label{eq:J:exp}
J_{j} = {H}_j K_j^{-1} - w_j e_j^T K_j^{-1}.
\end{equation}
To get a $Q_j$-less computation of $J_j$ we need to obtain a different expression for $w_j$.
Setting $u = Q_j^T A q_{j+1}$ and exploiting symmetry, 
from (\ref{eqn:Jrect}) we have
\begin{equation}\label{eqn:Jj+1}
\begin{bmatrix} J_j & u \\ u^T & \eta \end{bmatrix}
\begin{bmatrix} I \\ \beta_j/\xi_j e_j^T K_j^{-1} \end{bmatrix} 
=
\underline{H}_j K_j^{-1} ,
\end{equation}
with $w_j= u \beta_j/\xi_j$.
For the last row it holds  that
$u^T = e_{j+1}^T \underline{H}_j K_j^{-1} - \eta \beta_j/\xi_j e_j^T K_j^{-1}$,
which gives the sought-after expression for $w_j$. 
Summarizing, at step $j+1$ we can completely define $J_j$ 
without storing the whole $Q_j$. In particular, its last column is given by 
\begin{align}\label{eq:J_lastcolumn}
J_{j}e_j =& {H}_j K_j^{-1}e_j - w_j e_j^T K_j^{-1}e_j \notag\\
=& {H}_j K_j^{-1}e_j - \left(K_j^{-T}\underline{H}_j^Te_{j+1}   - K_j^{-T}e_j\eta \beta_j/\xi_j  \right)\beta_j/\xi_j e_j^T K_j^{-1}e_j \notag\\
=& {H}_j K_j^{-1}e_j - K_j^{-T}e_j\left(\xi_{j} - \eta \right)\frac{\beta_j^2}{\xi_j^2}  \left( e_j^T K_j^{-1}e_j\right).
\end{align}
The most expensive steps in \eqref{eq:J_lastcolumn} are the solution of the linear 
systems with $K_j$ and $K_j^T$. The tridiagonal structure of these matrices
allows us to derive a recurrence for the two solution vectors as the iterations proceed,
making the overall computation cheaper than explicitly solving the linear systems from scratch at each iteration $j$.

\begin{lemma}\label{lemma:thomas}
With the previous notation, for $j=1, \ldots, m$ the solutions $(y_j, t_j)$ to the systems
$K_jy=e_j$ and $K_j^Tt=e_j$ can be obtained via the following recurrences
\begin{equation}\label{eqn:yt}
y_j = \frac 1 {\omega_j} \left (
e_j - \frac{\beta_{j-1}}{\xi_{j-2}} \begin{bmatrix} y_{j-1} \\ 0
       \end{bmatrix}\right ) ,
\quad
t_j = \frac 1 {\omega_j} \left (
e_j - \frac{\beta_{j-1}}{\xi_{j-1}} \begin{bmatrix} t_{j-1} \\ 0
       \end{bmatrix}\right ) , \quad j\geq2, 
\end{equation}
with $y_1=1$ and $t_1=1$, 
 where $\omega_j\in\mathbb{R}$ is given by the following recursive formula
\begin{equation}\label{eqn:diagU}
\omega_1=1,\;\omega_2=\frac{\alpha_2}{\xi_{1}}+1, 
\quad \omega_j=\frac{\alpha_j}{\xi_{j-1}}+1-
\frac{\beta_{j-1}^2}{\xi_{j-1}\xi_{j-2}\omega_{j-1}},\; j\geq3.
\end{equation}
\end{lemma}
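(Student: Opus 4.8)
The plan is to argue by induction on $j$, exploiting two structural facts: that $K_j$ is tridiagonal with $K_{j-1}$ as its leading principal submatrix, and that the right-hand side in both systems is the last canonical vector $e_j$. From the expression $\underline{K}_m = \underline{I}_m + D_m^{-1}\underline{H}_m$ I would first read off the nonzero entries of $K_j$: diagonal $(K_j)_{ii} = 1 + \alpha_i/\xi_{i-1}$ (with $(K_j)_{11}=1$ since $\xi_0=\infty$), subdiagonal $(K_j)_{i,i-1}=\beta_{i-1}/\xi_{i-1}$, and superdiagonal $(K_j)_{i-1,i}=\beta_{i-1}/\xi_{i-2}$. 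Because $K_j$ is tridiagonal, the partition
$$
K_j = \begin{bmatrix} K_{j-1} & \frac{\beta_{j-1}}{\xi_{j-2}}\, e_{j-1} \\[2pt] \frac{\beta_{j-1}}{\xi_{j-1}}\, e_{j-1}^T & 1+\frac{\alpha_j}{\xi_{j-1}} \end{bmatrix}
$$
isolates the coupling between the leading block and the last row/column as multiples of $e_{j-1}$ only.

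Then for $K_j y = e_j$, writing $y = [\,(y^{(1)})^T,\, \eta\,]^T$, the first block-row gives $y^{(1)} = -\eta\,\frac{\beta_{j-1}}{\xi_{j-2}} K_{j-1}^{-1} e_{j-1} = -\eta\,\frac{\beta_{j-1}}{\xi_{j-2}}\, y_{j-1}$, where the inductive hypothesis identifies $K_{j-1}^{-1}e_{j-1}$ with $y_{j-1}$. Substituting into the last equation, the scalar $\eta$ satisfies an equation whose coefficient is precisely the claimed $\omega_j$. The one point that must be made explicit in the induction is that the last component of $y_{j-1}$ equals $1/\omega_{j-1}$: this is immediate from the recurrence itself, since the correction vector $[\,y_{j-2}^T,\,0\,]^T$ contributes nothing in the last slot, so only the $e_{j-1}/\omega_{j-1}$ term survives there. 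With this, the coupling term collapses to $\beta_{j-1}^2/(\xi_{j-1}\xi_{j-2}\omega_{j-1})$, yielding $\eta = 1/\omega_j$ and reproducing the stated recurrence for $y_j$. The transposed system $K_j^T t = e_j$ is handled verbatim; the only change is that sub- and super-diagonal coefficients swap roles in the partition of $K_j^T$, which is exactly why the $t_j$ recurrence carries $\beta_{j-1}/\xi_{j-1}$ where the $y_j$ recurrence carries $\beta_{j-1}/\xi_{j-2}$.

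Finally I would dispatch the base cases: $K_1 = [1]$ gives $y_1 = t_1 = 1$ and $\omega_1 = 1$, while at $j=2$ the convention $\xi_0 = \infty$ (so $1/\xi_0 = 0$) kills the $(1,2)$ entry of $K_2$ and the correction terms, leaving $\omega_2 = 1 + \alpha_2/\xi_1$ and $y_2 = t_2 = e_2/\omega_2$; the same convention shows that the general $j\ge 3$ formula for $\omega_j$ already reduces to the $\omega_2$ value at $j=2$.

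I do not expect a genuine obstacle here: the argument is a structured Gaussian-elimination (Thomas-algorithm) computation, and $\omega_j$ is nothing but the $j$-th pivot of the $LU$ factorization of the tridiagonal $K_j$. The only care needed is bookkeeping — keeping the asymmetric sub/super-diagonal coefficients straight between the $y$ and $t$ recurrences, and carrying the auxiliary claim ``last component of $y_{j-1}$ (resp.\ $t_{j-1}$) equals $1/\omega_{j-1}$'' as a companion inductive hypothesis rather than taking it for granted.
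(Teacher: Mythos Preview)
Your approach is correct and essentially coincides with the paper's: the paper explicitly writes the LU factorization $K_j=L_jU_j$, observes $L_j^{-1}e_j=e_j$, and identifies $\omega_j$ as the $j$th diagonal entry of $U_j$, which is exactly the Schur-complement/Thomas recursion you describe via the $2\times 2$ block partition. One small slip in your base-case discussion: at $j=2$ the convention $\xi_0=\infty$ kills only the $y$-correction $\beta_1/\xi_0$, not the $t$-correction $\beta_1/\xi_1$, so $t_2=\omega_2^{-1}\bigl[-\beta_1/\xi_1,\;1\bigr]^T\neq e_2/\omega_2$; this does not affect your inductive argument, since the last component of $t_2$ is still $1/\omega_2$ as required.
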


\begin{proof}
We focus on the computation of $y_j=K_j^{-1}e_j$. The computation of $t_j=K_j^{-T}e_j$ is analogous.
Under the assumption that ${\mathcal K}_j(A,v,\b{\xi}_j)$ is not $A$-invariant, $K_i$ is a 
nonsingular matrix, for $i=1,\dots,j$.
Hence the LU factorization  $K_j= L_jU_j$ with no pivoting exists, $y_j = U_j^{-1} L_j^{-1} e_j$,
 and the factors are given by
{\footnotesize
$$
L_j=\begin{bmatrix}
       1 & & & & \\
       \ell_2 & 1 & & & \\
             & \ell_3 & \ddots & & \\
             & & \ddots & \ddots & \\
             & & & \ell_j & 1\\
      \end{bmatrix} \in\RR^{j\times j}, \quad 
U_j=\begin{bmatrix}
      \omega_1 & 0 & & & \\
      & \omega_2 & \beta_2/\xi_1 & & \\
      & & \ddots & \ddots & \\
      & & & \ddots & \beta_{j-1}/\xi_{j-2} \\
      & & & & \omega_j\\
      \end{bmatrix} \in\RR^{j\times j}.
$$}
Thanks to the structure of $L_j$,  $x=L_j^{-1}e_j=e_j$. To solve
 $U_jy=e_j$ we first determine the diagonal elements of $U_j$.
Direct computation gives the recursion in (\ref{eqn:diagU}), that is
the computation of $\omega_j$ only requires information available in the current subspace. 
Moreover, if $y_{j-1}\in\mathbb{R}^{j-1}$ is such that $K_{j-1}y_{j-1}=e_{j-1}$, the 
solution $y_j$ to $U_jy=e_j$ can be derived as in~\eqref{eqn:yt}.
Analogously, the solution $t_j$ to $K_j^Tt=e_j$ is obtained as in~(\ref{eqn:yt}),
where $t_{j-1}\in\mathbb{R}^{j-1}$ is such that $K_{j-1}^Tt_{j-1}=e_{j-1}$. 
\end{proof}
\vskip 0.1in

\begin{theorem}\label{th:J_jlastcolumn}
With the notation and results of Lemma~\ref{lemma:thomas},
at the $j$-th iteration  
 the last column (or row) of $J_j=Q_j^TAQ_j$ is given by 
 $$ 
J_{j}e_j=\widehat y_j -\frac{\beta_j^2}{\xi_j^2} \frac{\xi_j - \eta}{\omega _j} t_j,
$$
where $t_j$ is defined as in Lemma~\ref{lemma:thomas} while $\widehat y_j := H_j y_j$ satisfies
$$\widehat y_1=\alpha_1,\quad \widehat y_j=  \begin{bmatrix}
        -\widehat y_{j-1}\frac{\beta_{j-1}}{\xi_{j-2}\omega_j} \\
        \beta_{j-1}e_{j-1}^Ty_{j} +\frac {\alpha_j}{\omega_j}\\
       \end{bmatrix}+\frac{\beta_{j-1}}{\omega_j}e_{j-1}, \quad \text{for}\;j>1.$$
\end{theorem}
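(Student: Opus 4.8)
The plan is to read off both asserted identities by feeding the recurrences of Lemma~\ref{lemma:thomas} into the already-derived closed form \eqref{eq:J_lastcolumn}, and then to exploit the tridiagonal block structure of $H_j$ to turn the matrix--vector product $H_j y_j$ into a short recurrence.

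First I would establish the displayed formula for $J_j e_j$. Starting from \eqref{eq:J_lastcolumn}, I replace $K_j^{-1}e_j$ by $y_j$ and $K_j^{-T}e_j$ by $t_j$, exactly as supplied by Lemma~\ref{lemma:thomas}. The only surviving scalar factor is $e_j^T K_j^{-1}e_j = e_j^T y_j$, i.e.\ the last entry of $y_j$. Reading the recurrence \eqref{eqn:yt}, the last component of $e_j - \tfrac{\beta_{j-1}}{\xi_{j-2}}\left[\begin{smallmatrix} y_{j-1}\\ 0\end{smallmatrix}\right]$ equals $1$, so that $e_j^T y_j = 1/\omega_j$. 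Substituting this value and writing $\widehat y_j := H_j y_j$ immediately gives $J_j e_j = \widehat y_j - \tfrac{\beta_j^2}{\xi_j^2}\tfrac{\xi_j-\eta}{\omega_j}\,t_j$. This first half is essentially bookkeeping, once the identity $e_j^T y_j = 1/\omega_j$ is noticed.

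Next I would derive the recurrence for $\widehat y_j$. The idea is to partition the tridiagonal matrix as
$$
H_j = \begin{bmatrix} H_{j-1} & \beta_{j-1}e_{j-1} \\ \beta_{j-1}e_{j-1}^T & \alpha_j\end{bmatrix},
$$
and to insert the split of $y_j$ coming from \eqref{eqn:yt}, whose leading block of size $j-1$ is $-\tfrac{\beta_{j-1}}{\xi_{j-2}\omega_j}\,y_{j-1}$ and whose last entry is $1/\omega_j$. Multiplying $\widehat y_j = H_j y_j$ out block by block, the leading block yields $H_{j-1}\!\left(-\tfrac{\beta_{j-1}}{\xi_{j-2}\omega_j}y_{j-1}\right) + \tfrac{\beta_{j-1}}{\omega_j}e_{j-1}$; invoking $H_{j-1}y_{j-1}=\widehat y_{j-1}$ this is precisely $-\tfrac{\beta_{j-1}}{\xi_{j-2}\omega_j}\widehat y_{j-1} + \tfrac{\beta_{j-1}}{\omega_j}e_{j-1}$. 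The last entry is $\beta_{j-1}e_{j-1}^T y_j + \alpha_j/\omega_j$, which recovers the scalar bottom component. Collecting these two contributions reproduces the stated formula for $j>1$, while the base case $\widehat y_1=\alpha_1$ follows from $H_1=[\alpha_1]$ and $y_1=1$.

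The computation is mostly routine linear algebra; the only care needed is in the indexing of the block partition and in interpreting $e_{j-1}$ consistently (as the $(j-1)$-th canonical vector, of dimension $j-1$ inside the off-diagonal block and of dimension $j$ in the final additive term $\tfrac{\beta_{j-1}}{\omega_j}e_{j-1}$). The one genuine insight is the identification of the last entry of $y_j$ with $1/\omega_j$, which simultaneously closes the first formula and feeds the recursion for $\widehat y_j$; everything else is consistent block multiplication, so I expect no substantive obstacle beyond keeping the indices aligned.
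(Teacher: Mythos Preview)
Your proposal is correct and follows essentially the same route as the paper: substitute $y_j$ and $t_j$ from Lemma~\ref{lemma:thomas} into \eqref{eq:J_lastcolumn}, use $e_j^T y_j=1/\omega_j$ to simplify the scalar factor, and then exploit the $2\times 2$ block partition of the tridiagonal $H_j$ together with the split of $y_j$ to obtain the recurrence for $\widehat y_j$. Your explicit remark on the two different dimensions in which $e_{j-1}$ is used is a helpful clarification that the paper leaves implicit.
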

{\it Proof.} By plugging the expressions of $y_j=K_j^{-1}e_j$ and $t_j=K_j^{-T}e_j$ given 
in Lemma~\ref{lemma:thomas} into \eqref{eq:J_lastcolumn} we get
\begin{align*}
 J_{j}e_j =&  {H}_j K_j^{-1}e_j - K_j^{-T}e_j
(\xi_{j} - \eta )\frac{\beta_j^2}{\xi_j^2}  \left( e_j^T K_j^{-1}e_j\right)=H_jy_j
-\frac{\beta_j^2}{\xi_j^2}\frac{\xi_{j} - \eta }{\omega_j}
t_j.
\end{align*}
Using Lemma~\ref{lemma:thomas} and
the tridiagonal structure of $H_j$ we can write $\widehat y_j=H_jy_j$ as 
$$
\widehat y_j= \begin{bmatrix}
        -H_{j-1}y_{j-1}\frac{\beta_{j-1}}{\xi_{j-2}\omega_j} \\
        \beta_{j-1}e_{j-1}^Ty_{j} +\frac{\alpha_j}{\omega_j}\\
       \end{bmatrix}+\frac{\beta_{j-1}}{\omega_j}e_{j-1}= 
        \begin{bmatrix}
        -\widehat y_{j-1}\frac{\beta_{j-1}}{\xi_{j-2}\omega_j} \\
        \beta_{j-1}e_{j-1}^Ty_{j} +\frac{\alpha_j}{\omega_j}\\
       \end{bmatrix}+\frac{\beta_{j-1}}{\omega_j}e_{j-1} .  \qquad \square
$$
\vskip 0.1in
 
Theorem~\ref{th:J_jlastcolumn} also
shows that by storing the low dimensional vectors $y_{j-1}$, $\widehat y_{j-1}$, 
and $t_{j-1}$, along with some additional scalar quantities, the allocation of the 
matrices $H_j$ and $K_j$ can be avoided.

\begin{algorithm}[t]
  \DontPrintSemicolon
  \SetKwInOut{Input}{input}\SetKwInOut{Output}{output}
  \Input{$A\in\mathbb{R}^{n\times n}$, $v\in\mathbb{R}^{n}$, $m\in{\mathbb N}$, $m>0$,
 $\b{\xi}_m=[\xi_1, \ldots, \xi_m]$.}
  \Output{$J_{m}\in\mathbb{R}^{m\times m}$ s.t. $J_{m}=Q_m^TAQ_m$ where $\text{Range}(Q_m)={\cal K}_m(A,v,\b{\xi}_m) $.}
  \BlankLine
  Set $\widehat q=v/\|v\|$, $\xi_{-1}:=\infty, \xi_0:=\infty, \beta_0=0, \bar q=0$\;
  
  \For{$j=1, \ldots, m$}{
    Set $\widetilde r=A\widehat q-\beta_{j-1}(I-A/\xi_{j-2})\bar q$ and $\widetilde s=(I-A/\xi_{j-1})\widehat q$\;
    
    Solve  $(I-\frac 1 \xi_j A)[r,s]=[\widetilde r,\widetilde s]$\;
    
    Compute $\alpha_j=\frac{r^T\widehat q}{s^T\widehat q}$\;
    
    Set $q=r-\alpha_j s$\;
    
    Set $\beta_j=\|q\|$, $\bar q=\widehat q$, $\widehat q=q/\beta_j$\;
    
    \If{j=1}{
    Set $\omega_1=y_1=t_1=1$ and $\widehat y_1=\alpha_1$
    }\Else{
    Set $\omega_j=\alpha_j/\xi_{j-1}+1-\beta_{j-1}^2/(\xi_{j-1}\xi_{j-2}\omega_{j-1})$ \;
    
    Set
    $y_j=\begin{bmatrix}
        -y_{j-1}\frac{\beta_{j-1}}{\xi_{j-2}\omega_j} \\
        \frac 1 \omega_j\\
       \end{bmatrix}
    $,
    $t_j= \begin{bmatrix}
        -t_{j-1}\frac{\beta_{j-1
        }}{\xi_{j-1}\omega_j} \\
        \frac 1 \omega_j\\
       \end{bmatrix}$, and $\widehat y_j=\begin{bmatrix}
        -\widehat y_{j-1}\frac{\beta_{j-1}}{\xi_{j-2}\omega_j} \\
        \beta_{j-1}e_{j-1}^Ty_{j} +\frac{\alpha_j}{\omega_j}\\
       \end{bmatrix}+\frac{\beta_{j-1}}{\omega_j} e_{j-1}    $
    }
    Compute $\eta=\widehat q^TA\widehat q$\;
    
    Set $J_{1:j,j}=\widehat y_j-\frac{\beta_j^2}{\xi^2}\frac{\xi_j - \eta}{\omega_j}t_j$ and $J_{j,1:j}=J_{1:j,j}^T$\;
  }
  \caption{$Q_m$-less rational Lanczos.\label{alg:rational_lanczos}}
\end{algorithm}

{\color{black}
The following proposition shows that the 
computation of $K_j^{-1}e_j$ and $K_j^{-T}e_j$ in Lemma~\ref{lemma:thomas} by
means of the LU factorization is backward stable. 
This result ensures that  using Gaussian elimination does 
not introduce any instability in the update of $J_j$ in
 Theorem~\ref{th:J_jlastcolumn}. 

\begin{proposition}\label{lemma:stabrec}
 Assume that the elements $\alpha_j$ and $\beta_j$ of the matrix $K_j$ are computed exactly. Moreover, let the matrix $A$ be symmetric positive (negative) definite, and the shifts $\xi_j$ be negative (positive).
 If the unit roundoff  is small enough, then the solutions of the systems $K_j y = e_j$ and $K_j^T t = e_j$ computed by the recurrences~\eqref{eqn:yt} and \eqref{eqn:diagU} are backward stable.
\end{proposition}

The proof is a direct consequence of the stability analysis
in Proposition~\ref{lemma:stabLU}. 
}


\subsection{The $Q_m$-less procedure}\label{The overall procedure}
The implementation of the proposed memory saving method is summarized in Algorithm~\ref{alg:rational_lanczos}. 
Step 4 relies on the fact that solving a single linear system with $p$ right-hand sides is more 
efficient than sequentially solving $p$ systems with the same coefficient matrix. 
Indeed, assuming for instance that a sparse direct solver is used, the 
symbolic analysis phase {\color{black} and the factorization step  
can be performed once, for all the considered right-hand sides. The same gains might be 
obtained in the sequential solution of the two systems if a very fine tuning of the adopted linear 
solver is possible. Nevertheless, also in the latter scenario the block solution strategy is still 
advantageous thanks to a better computer handling of the dense kernels involved in the solution process. 
Moreover, the coefficient factors need to be accessed only once avoiding an increment 
in the storage requirements.
}

In Algorithm~\ref{alg:rational_lanczos} we suppose that the shifts are given. 
Alternatively, {\color{black} dynamic} shift computation strategies can be easily incorporated 
in the algorithm;
see, e.g.,  \cite{DruSim11_RKSM,Druskin2010,Guettel2013} for different shift selection strategies.
Since $A$ is symmetric, all shifts can be taken to be real.

{\color{black}  Algorithm~\ref{alg:rational_lanczos} should be equipped 
with a stopping criterion that must not involve the whole basis $Q_m$. Such a stopping criterion depends on the application of interest and
different instances are discussed in section~\ref{Applications}.} 

\begin{remark}
Algorithm~\ref{alg:rational_lanczos} can be generalized by replacing
the starting vector $v$ 
with a full column rank matrix 
$V\in\mathbb{R}^{n\times p}$, $p> 1$. This generates 
the {\em block} rational Krylov subspace 
${\cal K}_m(A,V,\b{\xi}_m) = {\rm range}([V, (I- \xi_1^{-1} A)^{-1} V,\ldots, \prod_{j=1}^{m-1} (I- \xi_j^{-1} A)^{-1}V])$.
In this construction, many of the scalar quantities involved in Algorithm~\ref{alg:rational_lanczos} are
replaced by $p\times p$ matrices.
Also in this case, the matrix $J_m=Q_m^TAQ_m\in\mathbb{R}^{mp\times mp}$ can still be computed 
$Q_m$-less at low computational cost.
We include the corresponding implementation as Algorithm~\ref{alg:block_rational_lanczos}
in the appendix.
{Analogously to the matrix form~\eqref{eq:rat:lanc:mtx}, the recurrences in 
Algorithm~\ref{alg:block_rational_lanczos} can be written as
\begin{equation}\label{eq:block:rat:lanc:mtx}
	A Q_{m} K_m = Q_{m} H_m + \left(I- \xi_m^{-1} A\right ) \widehat{Q}_{m+1}\beta_m E_m^T ,
\end{equation}
where $\widehat{Q}_{m+1}\in\mathbb{R}^{n \times p}$, $Q_m = [\widehat{Q}_1, \dots, \widehat{Q}_m] \in \mathbb{R}^{n \times mp}$, $K_m, H_m \in \mathbb{R}^{mp \times mp}$ are block tridiagonal matrices with $(p\times p)$-size blocks, $\beta_m \in \mathbb{R}^{p \times p}$ , and $E_m = e_m\otimes I_p \in \mathbb{R}^{mp \times p}$. 
We also have the block counterpart of \eqref{eq:J:exp}, that is 
\begin{equation}\label{eq:block:J:exp}
J_{m} = {H}_m K_m^{-1} - W_m E_m^T K_m^{-1}, \quad  W_m = (\xi_m)^{-1} Q_m^T A \widehat{Q}_{m+1}\beta_m .
\end{equation}

}

\end{remark}

\section{Applications}\label{Applications}
In this section we illustrate the applicability of the $Q_m$-less rational Krylov algorithm
to a variety of problems. 
%
All numerical results 
have been obtained by running \MATLAB{} R2017b~\cite{MATLAB} on a
standard node\footnote{CPU: 2x Intel Xeon Skylake Silver 4110 @ 2.1 GHz, 8 cores per CPU. RAM: 192 GB DDR4 ECC. {\color{black}See also {\tt https://www.mpi-magdeburg.mpg.de/cluster/mechthild}.}} of the Linux cluster \texttt{mechthild} hosted at the Max Planck Institute for Dynamics 
of Complex Technical Systems in Magdeburg, Germany.

\subsection{Quadratic and bilinear forms}\label{sec:bilinear}
Consider a function $f$ defined on the spectrum of the symmetric matrix $A$, 
and vectors $u$, $v$. The approximation of the bilinear form 
$u^T f(A)\, v$ (or quadratic form for $u=v$)
arises in many applications including network analysis \cite{EstRod05}, 
regularization problems \cite{FenReiRod16},   
electronic structure calculations \cite{SaaCheSho10},
solution of PDEs \cite{Lam11}, 
Gaussian processes \cite{Pettitt2002}, and many others \cite{BaiFahGol96,BenGol99,GolMeuBook10}.
 Bilinear forms can also be used to estimate the trace of $f(A)$; see section \ref{sec:trace}.

Given a large and sparse $A$, the use of (polynomial) Krylov subspace methods for the approximation of $u^T f(A)\, v$ is well established and grounded in a theoretical framework comprising orthogonal polynomials and Gauss quadrature~\cite{GolMeuBook10}.
Under the assumptions that $u=v$ and $\|v\|=1$, the $m$-th Lanczos iteration 
produces an $m \times m$ tridiagonal matrix $T_m$ (known as \emph{Jacobi} matrix) giving the approximation
$v^T f(A)\, v \approx e_1^T f(T_m) e_1$.
Such approximation relies on the so-called \emph{moment matching property}, that is
$v^T A^j\, v = e_1^T (T_m)^j e_1,$ $j=0,\dots,2m-1$,
or, equivalently,
$v^T p(A)\, v = e_1^T p(T_m) e_1$,
for every polynomial $p(x)$ of degree at most $2m-1$.
Such property is connected with the Gauss quadrature approximation for a Riemann-Stieltjes integral determined by $A$ and $v$; see, e.g., \cite{GolMeuBook10,LieStrBook13}.

An analogous result can be given for rational Krylov subspaces. 
Let $Q_m$ and $J_m$ be the matrices associated with
the rational Krylov subspace $\mathcal{K}_m(A,v,\b{\xi}_m)$. 
Given $q(x) = \prod_{j=1}^{m-1}(1-x/\xi_j)$ and a polynomial $p=p(x)$ of degree at most $m-1$, it holds {\color{black}that}
\begin{equation}\label{eq:rat:exact}
   p(A)q(A)^{-1} \,v = Q_m p(J_m) q(J_m)^{-1} e_1 \in \mathcal{K}_m(A,v,\b{\xi}_m);
\end{equation}
see, e.g., \cite[Lemma 3.1]{DruKniZas09}, \cite[Lemma 4.6]{GuettelPhD.10}.
Left multiplication by $Q_mQ_m^T A$ yields
$ Q_mQ_m^T \, A\, p(A)q(A)^{-1} \,v = Q_m \, J_m \, p(J_m) q(J_m)^{-1} e_1$,
and, hence, by linearity
\begin{equation}\label{eq:rat:exact:n}
   Q_m Q_m^T p_m(A)q(A)^{-1} \,v = Q_m p_m(J_m) q(J_m)^{-1} e_1,
\end{equation}
for every polynomial $p_m$ of degree at most $m$.
The following proposition extends the moment matching property to the rational case by using ideas borrowed from Vorobjev's moment problem (see, e.g., \cite[Section~3.7.1]{LieStrBook13}). 
The result can also be obtained as an application of the results in \cite[Th.~2]{GalGriVDo96}. 
However, our approach provides a short alternative proof that, to our knowledge, 
has not yet appeared in the literature.
\vskip 0.05in
\begin{prop}\label{prop:rat:mmp}
With the previous notation for ${\cal K}_m(A,v,\b{\xi}_m)$, $Q_m$ and $J_m$,
let $q(x) = \prod_{j=1}^{m-1}(1 - x/\xi_j)$.
   Then for every polynomial $p(x)$ of degree at most $2m-1$
   $$ v^T p(A)q(A)^{-2} \,v = e_1^T p(J_m) q(J_m)^{-2} e_1. $$
\end{prop}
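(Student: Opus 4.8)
The plan is to prove the rational moment matching identity
$$
v^T p(A) q(A)^{-2}\, v = e_1^T p(J_m) q(J_m)^{-2} e_1
$$
for all polynomials $p$ of degree at most $2m-1$ by reducing it, through the factorization identity \eqref{eq:rat:exact:n}, to an orthogonality argument. The key observation is that any such $p$ can be split as a product of two factors each of degree at most $m-1$ (or one of degree $m$ and one of degree $m-1$), and the quantity $q(A)^{-2}$ factors symmetrically as $q(A)^{-1}\cdot q(A)^{-1}$. This suggests writing $p(A)q(A)^{-2} = \bigl(p_1(A)q(A)^{-1}\bigr)^T \bigl(p_2(A)q(A)^{-1}\bigr)$ in a suitable sense, so that the bilinear form becomes an inner product of two vectors that each live in $\mathcal{K}_m(A,v,\b{\xi}_m)$ and are exactly represented by \eqref{eq:rat:exact}.

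First I would fix the degree decomposition. Write a generic $p$ of degree at most $2m-1$ via polynomial division by $q(x)^2$ or, more directly, observe that it suffices to verify the claim on a spanning set of the space of such polynomials; the natural choice is products $p(x) = a(x)\,b(x)$ where $\deg a \le m$ and $\deg b \le m-1$, since these span all polynomials of degree at most $2m-1$. For such a product I would compute
$$
v^T a(A) q(A)^{-1} q(A)^{-1} b(A)\, v,
$$
using symmetry of $A$ (hence of $a(A)$, $b(A)$, $q(A)$) to regroup the factors as
$$
\bigl(q(A)^{-1} a(A)\, v\bigr)^{\!T} \bigl(q(A)^{-1} b(A)\, v\bigr).
$$
Now \eqref{eq:rat:exact} applies to the right factor (as $\deg b \le m-1$), giving $q(A)^{-1}b(A)v = Q_m q(J_m)^{-1} b(J_m) e_1$, and \eqref{eq:rat:exact:n} applies to the left factor (allowing $\deg a \le m$) after inserting the projector $Q_mQ_m^T$. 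Because the right-hand vector already lies in $\mathrm{Range}(Q_m)$, projecting the left factor does not change the inner product: $x^T y = x^T Q_m Q_m^T y$ whenever $y \in \mathrm{Range}(Q_m)$.

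Carrying this out, the inner product becomes
$$
\bigl(Q_m a(J_m) q(J_m)^{-1} e_1\bigr)^{\!T}\bigl(Q_m b(J_m) q(J_m)^{-1} e_1\bigr)
= e_1^T q(J_m)^{-1} a(J_m)\, Q_m^T Q_m\, b(J_m) q(J_m)^{-1} e_1,
$$
and since $Q_m$ has orthonormal columns, $Q_m^T Q_m = I_m$, so this collapses to $e_1^T a(J_m) b(J_m) q(J_m)^{-2} e_1 = e_1^T p(J_m) q(J_m)^{-2} e_1$, using that $a(J_m)$, $b(J_m)$, $q(J_m)$ are polynomials in the single matrix $J_m$ and hence commute. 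This matches the left-hand side computed above, completing the identity on the spanning set and therefore, by linearity, for all $p$ of degree at most $2m-1$.

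The main obstacle I anticipate is the bookkeeping on degrees: I must be careful that the decomposition $p = a\cdot b$ with $\deg a \le m$ and $\deg b \le m-1$ genuinely spans all of $\deg p \le 2m-1$, and that the correct one of the two exactness statements (\eqref{eq:rat:exact} versus the projected version \eqref{eq:rat:exact:n}) is invoked for each factor — \eqref{eq:rat:exact} holds only up to degree $m-1$ without a projector, whereas degree $m$ requires the $Q_mQ_m^T$ insertion. A clean way to sidestep the asymmetry is to apply the projected identity \eqref{eq:rat:exact:n} to the higher-degree factor and the unprojected exact identity \eqref{eq:rat:exact} to the lower-degree factor, then use that the image of the latter lies in $\mathrm{Range}(Q_m)$ so that the projector on the former becomes harmless. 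The remaining steps are routine matrix algebra exploiting symmetry of $A$ and orthonormality of $Q_m$.
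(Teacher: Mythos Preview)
Your proof is correct and follows essentially the same route as the paper's: both decompose $p$ into factors of degree at most $m$ and $m-1$, invoke \eqref{eq:rat:exact} on the lower-degree factor and \eqref{eq:rat:exact:n} on the higher-degree one, and exploit that the former already lies in $\mathrm{Range}(Q_m)$ so that the projector $Q_mQ_m^T$ can be inserted for free. The paper phrases this last point as an orthogonality statement (the vector $Q_m p_m(J_m)q(J_m)^{-1}e_1 - p_m(A)q(A)^{-1}v$ is orthogonal to $\mathcal{K}_m$) rather than as a projector insertion, but the two formulations are equivalent.
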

\begin{proof}
  For every $p_{m-1}(x)$ of degree at most $m-1$, equation~\eqref{eq:rat:exact} gives
  $$ v^T p_{m-1}(A)q(A)^{-1} \,v = e_1^T p_{m-1}(J_m) q(J_m)^{-1} e_1. $$
 Consider a polynomial $p_m$ of degree at most $m$. Given that $(Q_m Q_m^T)^2 = Q_m Q_m^T$, equation~\eqref{eq:rat:exact:n} 
becomes $Q_m Q_m^T\left( Q_m p_{m}(J_m) q(J_m)^{-1} e_1 - p_{m}(A)q(A)^{-1} \,v\right) = 0$,
  implying that the vector $Q_m p_{m}(J_m) q(J_m)^{-1} e_1 - p_{m}(A)q(A)^{-1} \,v$ is orthogonal to $\mathcal{K}_m(A,v,\b{\xi}_m)$. Therefore, for any polynomial $p_{m-1}(x)$ of degree at most $m-1$ we get
  $$ 
v^T p_{m-1}(A) q(A)^{-1} \left( Q_m p_{m}(J_m) q(J_m)^{-1} e_1 - p_{m}(A)q(A)^{-1} \,v\right) = 0, 
$$
  (where we have used the symmetry of $A$), {\color{black} that is}
$$
{\color{black}v^T p_{m-1}(A) q(A)^{-1} Q_m p_{m}(J_m) q(J_m)^{-1} e_1 = 
v^T p_{m-1}(A) p_{m}(A) (q(A)^{-1})^2 \,v.}
$$
 {\color{black}This} equality concludes the proof since {\color{black} by \eqref{eq:rat:exact} it holds that}
  $v^T p_{m-1}(A) q(A)^{-1}$ $= e_1^T p_{m-1}(J_m) q(J_m)^{-1} Q_m^T$. 
\end{proof}

 Proposition \ref{prop:rat:mmp} holds for every orthogonalization process of a rational Krylov subspace, i.e., for every orthogonal basis. As a consequence, it is not related to short recurrences.
 We also remark that similar properties have been derived in \cite[Th.~3.1]{PraRei14} for a different kind of rational Krylov subspaces. Extensions to the non-symmetric case have also been studied;
see, e.g.,  \cite{GalGriVDo96,DecBul12} among many others.

Thanks to Algorithm \ref{alg:rational_lanczos}, we can compute the matrix $J_m$ by means of the
$Q_m$-less short-term recurrence rational method, that is, we can compute the approximation
\begin{equation}\label{eq:bform:approx} 
 e_1^T f(J_m) e_1 \approx v^T f(A) \, v .
\end{equation}
 The approximation error can be characterized by adapting the results in \cite{Yse05} to our case.
 Indeed, since $A$ is symmetric we can interpret the bilinear form as a Riemann-Stieltjes integral and the approximant \eqref{eq:bform:approx} as a rational Gauss quadrature, that is,
 $$ v^T f(A) v = \int f(\lambda) \textrm{d}\mu(\lambda) \approx \sum_{j=1}^m f(\lambda_j) \theta_j = e_1^T f(J_m) e_1, $$
 where $\mu(\lambda)$ is a measure depending on the spectrum and the eigenvectors of $A$, and the $\lambda_j$'s,  $\theta_j$'s are the eigenvalues  and eigenvectors of $J_m$, respectively (see the results in \cite[Chapter 7]{GolMeuBook10} which can be easily adapted to this rational case).
 In this framework, the approximation \eqref{eq:bform:approx} is a rational quadrature rule.
 Therefore, by \cite[Eq.~(4)]{Yse05}, 
 \begin{equation}\label{eq:bform:err:bound}
  | e_1^T f(J_m) e_1 - v^T f(A) \, v | \leq 2\|v\|^2
  \min_{\textrm{deg}(p) \leq 2m-1} \left| v^T\left( f(A) - p(A)q(A)^{-2}\right)v \right|,
 \end{equation}
where we used the notation of Proposition \ref{prop:rat:mmp} (cf. \cite[Th.~4.10]{GuettelPhD.10} related to a similar but different problem).

The block case can be treated analogously. Consider the $n \times p$ matrix $V$ and 
the $n \times n$ symmetric matrix $A$. Then using Algorithm~\ref{alg:block_rational_lanczos} 
we get the block $mp \times mp$ matrix $J_m$ and, hence, by setting
$E_1^T = [I_p, 0, \ldots, 0]\in \mathbb{R}^{p\times mp}$ we obtain the approximation
\begin{equation}\label{eq:block:biform}
    E_1^T f(J_m) E_1 \approx V^T f(A) V .
\end{equation}
%

Algorithm \ref{alg:rational_lanczos} can also approximate a bilinear form $u^T f(A)\, v$ with $u \neq v$. 
We describe various alternative strategies.
The first one is to rewrite the problem as
\begin{equation*}\label{eq:bform:approx:nonsym2}
  u^T f(A)\, v = \frac{1}{4}\left((u + v)^T f(A)\,(u + v) - (u - v)^T f(A)(u - v)\right), 
\end{equation*}
and run Algorithm \ref{alg:rational_lanczos} twice \cite[Section 7.3]{GolMeuBook10}. Such strategy maintains the same exactness of Proposition \ref{prop:rat:mmp} at twice the cost.
The second one considers the vector $u_m = Q_m^T \, u$ (computed on the fly) and the approximation
\begin{equation}\label{eq:bform:approx:nonsym}
 u_m^T f(J_m) e_1 \approx u^T f(A) v. 
\end{equation}
This approximant is exact for rational functions whose numerator has a degree up to $m-1$ and denominator $q$ from \eqref{eq:rat:exact}. 
{\color{black} The third possibility 
uses \eqref{eq:block:biform} applied to the $2 \times 2$ block bilinear form
$$ 
[\; u \;\; v \;]^T \, f(A) \, [\; u \;\; v \;] = 
\begin{bmatrix}
            u^T f(A) u & u^T f(A) v \\
            v^T f(A) u & v^T f(A) v
\end{bmatrix} ,
 $$
 whose (1,2) position yields the sought-after quantity;
 see, e.g., \cite[Eqs.~(6)--(7)]{Lam11}.
Finally, another possibility is to consider the 
rational variant of the nonsymmetric Lanczos algorithm 
in \cite{VanVanVan19}.}
%

\paragraph{Stopping criteria}
For a general function $f$ a cheap stopping criterion at the $m$th iteration is given by the difference between two iterates
$$ |u_m^T f(J_m) e_1 - u_{m-s}^T f(J_{m-s}) e_1|, $$
for some fixed index $s$ satisfying $1\le s < m$. 
This criterion  relies on the idea that the approximation error decreases as the iterations proceed. For 
the special case of the extended Krylov subspace and Laplace–Stieltjes functions 
the convergence to $f(A)v$ is indeed monotonic \cite{Schweitzer.16}, hence the criterion is reliable.
This simple criterion can be further {\color{black} developed} following the results in \cite{Chen.Saad.18}.

A ``residual-based'' criterion can be obtained if the function $f$ is such that $y(\tau) = f(\tau A)v$ is the solution to the differential equation $y^{(d)} = Ay$, 
with $y^{(d)}$ the $d$th derivative of $y$, $d \in \mathbb{N}$, and specified initial conditions for $\tau =0$.
Indeed, let $y_m(\tau )=Q_m f(\tau  J_m) e_1$ be the approximant derived by \eqref{eq:rat:exact} and define
the differential equation residual 
\begin{equation}\label{eq:fun:res}
        r_m(\tau ) = A y_m(\tau ) - y_m^{(d)}(\tau ).
\end{equation}
Then the norm of $r_m(1)$ is commonly used as stopping criterion for Krylov subspace
approximations to $y(1)$, see, e.g., \cite{Botchev.Grimm.Hochbruck.13,Druskin1998,KnizhnermanSimoncini2010}.
Computing $\|r_m(1)\|$ would require storing $Q_m$, however, it is possible
to use an upper bound with quantities available at the current step.
Indeed, 
using \eqref{eq:rat:lanc:mtx}, \eqref{eq:J:exp}, and $J_m f(\tau  J_m) e_1 = (f(\tau  J_m))^{(d)} e_1$, we get
\begin{align*}
    r_m(\tau ) =& A y_m(\tau ) - y_m^{(d)}(\tau ) = A Q_m f(\tau  J_m) e_1 - Q_m (f(\tau  J_m))^{(d)} e_1 \\
	   =&  \left(Q_{m} H_m + \beta_m (I- \xi_m^{-1} A )q_{m+1} 
e_m^T \right)K_m^{-1} f(\tau  J_m) e_1 - Q_m (f(\tau  J_m))^{(d)} e_1 \\
            =&  Q_{m} J_m f(\tau  J_m) e_1 + Q_m w_m e_m^T K_m^{-1} f(\tau  J_m) e_1   \\  
	&  + \beta_m (I-  \xi_m^{-1} A)q_{m+1} 
e_m^T K_m^{-1} f(\tau  J_m) e_1 - Q_m (f(\tau  J_m))^{(d)} e_1 \\
	   =&  (I-Q_mQ_m^T) (I- \xi_m^{-1}A )q_{m+1}\beta_m  e_m^T K_m^{-1} f(\tau  J_m) e_1 .
\end{align*}
 Therefore 
\begin{equation}\label{eq:res:bound}
 \|r_m(\tau )\| \leq  
	|\beta_m|\, (1+ |\xi_m^{-1}|\, \|A\| ) \, |e_m^T K_m^{-1} f(\tau  J_m) e_1|,
\end{equation}
and this holds in particular for $\tau =1$.
We recall that $t_m^T=e_m^T K_m^{-1}$ is computed iteratively during the recurrence (see Lemma~\ref{lemma:thomas}),
hence the only extra computational cost is given by the norm and the inner product $|t_m^T f(\tau  J_m) e_1|$.
As already mentioned, the
approximant $y_m$ is exact for rational functions with a numerator of degree at most $m-1$ and denominator $q$ from \eqref{eq:rat:exact}, while \eqref{eq:bform:approx} is exact on a much larger set of rational functions; cf.~\eqref{eq:rat:exact} with Proposition \ref{prop:rat:mmp}. Therefore, the previous stopping criterion may overestimate the error of \eqref{eq:bform:approx}, 
while it is more appropriate in \eqref{eq:bform:approx:nonsym} where $v \neq u$.

{The previous stopping criteria can be extended to the block case. 
For the latter one, we can derive the bound
$ \left\| R_m(\tau) \right\| \leq \| \beta_m \|\, (1 + |\xi_m^{-1}|\,\| A\|) \,\| E_m^T K_m^{-1} f(\tau J_m) E_1 \|.$
To prove the inequality above, consider the differential equation $Y^{(d)} = A Y$, with $Y(\tau) = f(\tau A) V$, and the approximant $Y_m(\tau) = Q_m f(\tau J_m) E_1$. The bound follows by using the formulas \eqref{eq:block:rat:lanc:mtx} and \eqref{eq:block:J:exp} and adapting the scalar case arguments seen above to the block case.
}

\begin{figure}[t]
    \centering
    \includegraphics[height=.55\textwidth,width=0.525\textwidth]{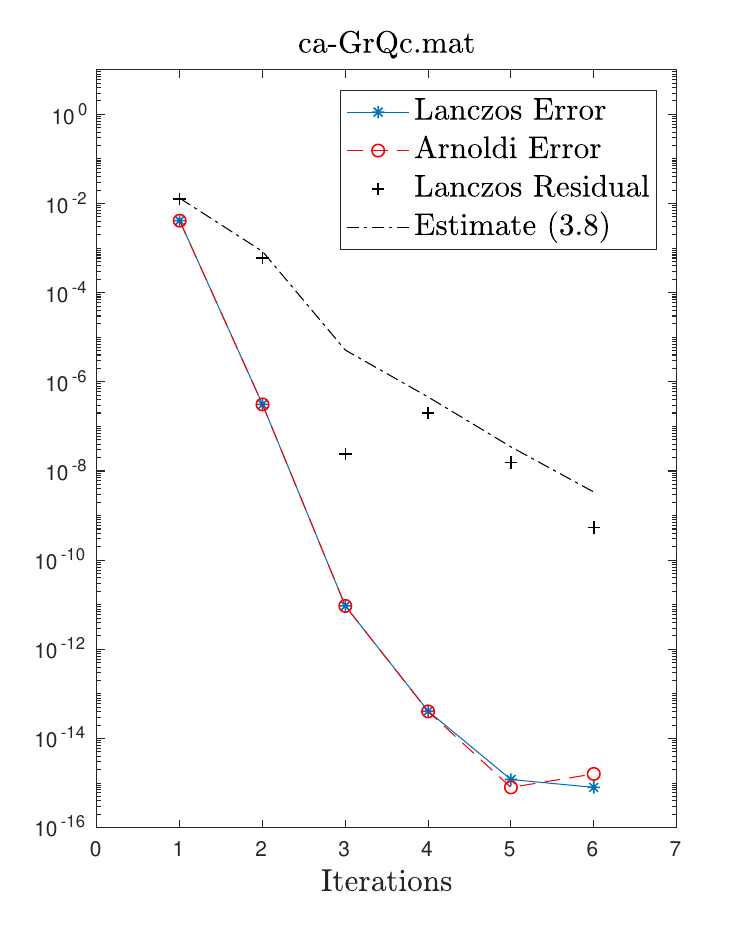}
\hskip -.30in
    \includegraphics[height=.55\textwidth,width=0.525\textwidth]{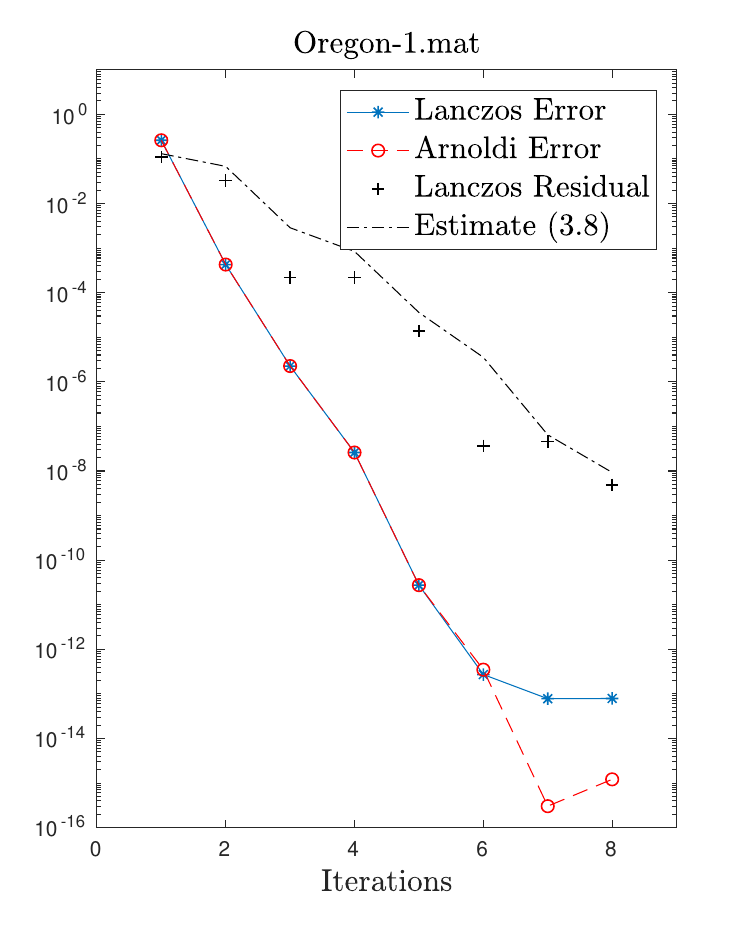}
\vskip -.2in
    \caption{Example~\ref{Ex:biform}. Convergence history of the Arnoldi error, together with the rational Lanczos error, residual norm \eqref{eq:fun:res} for $\tau=1$, and residual estimate \eqref{eq:res:bound}.
\label{fig1.biform}}
\end{figure}

  \begin{table}[t]
\centering
 \begin{tabular}{|r|r|r|c|c|c|}
\hline
 Matrix & size & nnz & Method & It. & Time (secs)   \\
\hline
   \multirow{2}{*}{ca-GrQc } & \multirow{2}{*}{5242} & \multirow{2}{*}{28980} & $Q_m$-less Lanczos & 6 & 0.079  \\
                             &                       &                        & Rat. Arnoldi       & 6 & 0.067  \\
   \hline
   \multirow{2}{*}{Oregon-1} & \multirow{2}{*}{11492} & \multirow{2}{*}{46818}& $Q_m$-less Lanczos & 8 & 0.129  \\
                             &                        &                       & Rat. Arnoldi       & 8 & 0.130   \\
\hline
 \end{tabular}
\caption{Example~\ref{Ex:biform} with $s=1$. Number of iterations of the 
$Q_m$-less rational Lanczos and Arnoldi methods and corresponding CPU times. Also reported
are the dimension and the number of nonzeros {\color{black} (\emph{nnz})} of 
each matrix. \label{tab_biform}}
\end{table}

\begin{example}\label{Ex:biform}
{\rm
    Let $A$ be the adjacency matrix of a network. 
For any $i\in\{1, \ldots, n\}$ the quantity $(\exp(A))_{ii}$ measures the importance of the $i$th node 
with respect to the network edge structure, the so-called \emph{$\exp$-centrality index} \cite{EstRod05}.
    We consider the symmetric normalized adjacency matrices \emph{Oregon-1} and \emph{ca-GrQc} of
size $11492$ and $5242$, respectively, from the 
SuiteSparse Matrix Collection \cite{Davis2011} and the node $i$ with the largest $\exp$-centrality. 
Both matrices are very sparse, with respectively 4 and 5 elements per row on average.
We are interested in approximating the bilinear form $e_i^T \exp(A-2I) e_i$ (note that $\exp(A) = \exp(A-2I)\exp(2)$, with $A-2I$ negative definite). 
As a quality measure, we use the error between the approximation obtained by the rational Lanczos/Arnoldi methods and the quantity obtained by the \MATLAB\ function {\tt expm}.
%

   Figure~\ref{fig1.biform} reports the absolute errors for both approximation methods
as the iterations proceed, until the final accuracy is attained after which, not surprisingly,
 the full orthogonalization approach shows higher accuracy; {\color{black} see, e.g., Figure~\ref{fig1.biform} (right)}. See also section~\ref{sec:fpa}. 
The figure also shows the norm of the rational Lanczos residual~\eqref{eq:fun:res} for $\tau=1$ and its 
estimate~\eqref{eq:res:bound}. 
   In Table~\ref{tab_biform}, we report the number of iterations and corresponding CPU times, 
confirming the similar behavior of the two approaches, in terms of computational costs.
The very limited number of iterations balances the cost of the extra solves in the Lanczos process
with that of the full orthogonalization in the Arnoldi iteration.
%
%
   
}   
\end{example}

\subsection{The trace of a matrix function}\label{sec:trace}
A problem strictly related to that of approximating a quadratic form is given by the
approximation of the trace of a matrix function, tr$(f(A)) = \sum_{i=1}^n ( f(A))_{ii} = \sum_{i=1}^n f(\lambda_i)$,
where we assume that $A$ is symmetric with eigenvalues $\lambda_1, \ldots, \lambda_n$, and $f(\lambda_i)$ is
well defined
for $i=1, \ldots, n$. The approximation of the trace by means of its definition is overly expensive for large
matrices, since it requires estimates for $e_i^T f(A) e_i$ for all $i=1, \ldots, n$.

A popular strategy is the use of a Monte-Carlo approximation.
Let $Z$ be a discrete random variable with values $\{1, -1\}$ with probability 0.5, and let $z$ be
a vector of $n$ independent samples of $Z$. Then $z^T f(A) z$ is an unbiased estimator for ${\rm tr}(f(A))$;
see, e.g., \cite{Hutchinson.89}.
By exploiting this result, one can generate $\ell$ sample vectors $z^{(k)}$, $k=1, \ldots, \ell$,
estimate $(z^{(k)})^T f(A) z^{(k)} \approx \tau_k$ by means of the procedure from section~\ref{sec:bilinear}, and
obtain
\begin{equation}\label{eq:trace:stoch}
{\rm tr}(f(A)) \approx  \frac 1 {\ell} \sum_{k=1}^\ell \tau_k.
\end{equation}
For the polynomial Lanczos method and $f(\lambda)=\lambda^{-1}$, this was analyzed in \cite{BaiFahGol96}; see
also \cite{Meurant.09}. The effectiveness of the overall approach
for general functions of symmetric matrices has been established in \cite{UbaruChenSaad.17}; see also 
{\cite{Mey2021} for an improved method for stochastic trace approximation,} and
\cite{cortinovis2020randomized} and its references for general randomized approaches.

In the past few years probing methods have also emerged as an important alternative, especially
in network analysis. These differ from Monte-Carlo approximations for the selection of the ``probing''
vectors $z^{(k)}$, which are then used to estimate $(z^{(k)})^T f(A) z^{(k)}$
 by means, for instance, of Krylov methods; see, e.g., \cite{frommer2020analysis,Bentbib2021,Mey2021}.

In all these strategies, a key step is the use of the Lanczos procedure to obtain
an approximation to $(z^{(k)})^T f(A) z^{(k)}$; a block method suggests itself. 
Rational Lanczos can effectively be used  to
speed up convergence, in terms of number of iterations, with respect to polynomial approaches. 
A disadvantage of the rational approach lies in the solution of linear systems with the 
possibly large matrix $I-\xi_j^{-1} A$, whose cost depends on the sparsity structure of $A$.

  \begin{table}[t]
\centering
 \begin{tabular}{|r|r|r|r|l|c|c|}
\hline
 size & $p$ & $\delta$ & nnz & Method & It. & Time (secs)   \\
\hline
   \multirow{4}{*}{1 000} & \multirow{4}{*}{20} & \multirow{2}{*}{0.02} & \multirow{2}{*}{2 260} & $Q_m$-less Lanczos & 6 & 0.026  \\
                         &                     &                       &  & Rat. Arnoldi       & 6 & 0.063  \\
  &  & \multirow{2}{*}{0.06} & \multirow{2}{*}{11 440} & $Q_m$-less Lanczos & 7 & 0.047  \\
                         &                     &                       &  & Rat. Arnoldi       & 7 & 0.094  \\                       
                         
   \hline
   \multirow{4}{*}{10 000} & \multirow{4}{*}{200} & \multirow{2}{*}{0.002} &  \multirow{2}{*}{11 278} & $Q_m$-less Lanczos & 5 & 1.82 \\
                          &                      &                         &  & Rat. Arnoldi       & 5 & 1.99  \\
&  & \multirow{2}{*}{0.006} &  \multirow{2}{*}{21 238} & $Q_m$-less Lanczos & 10 & 3.76 \\
                          &                      &                         &  & Rat. Arnoldi       & 10 & 5.96  \\
\hline
 \end{tabular}
\caption{Example~\ref{Ex:trace}. Number of iterations of the rational 
Lanczos/Arnoldi method needed to reach the maximal achievable accuracy along with 
the CPU time in seconds. The approximation space dimension is at most
$p\cdot$(\#~It.+1). {\color{black} \emph{nnz} refers to the number of nonzero elements in the considered matrices.}\label{tab_trace}}
\end{table}

\begin{figure}[htb]
    \centering
    \includegraphics[height=.55\textwidth,width=0.525\textwidth]{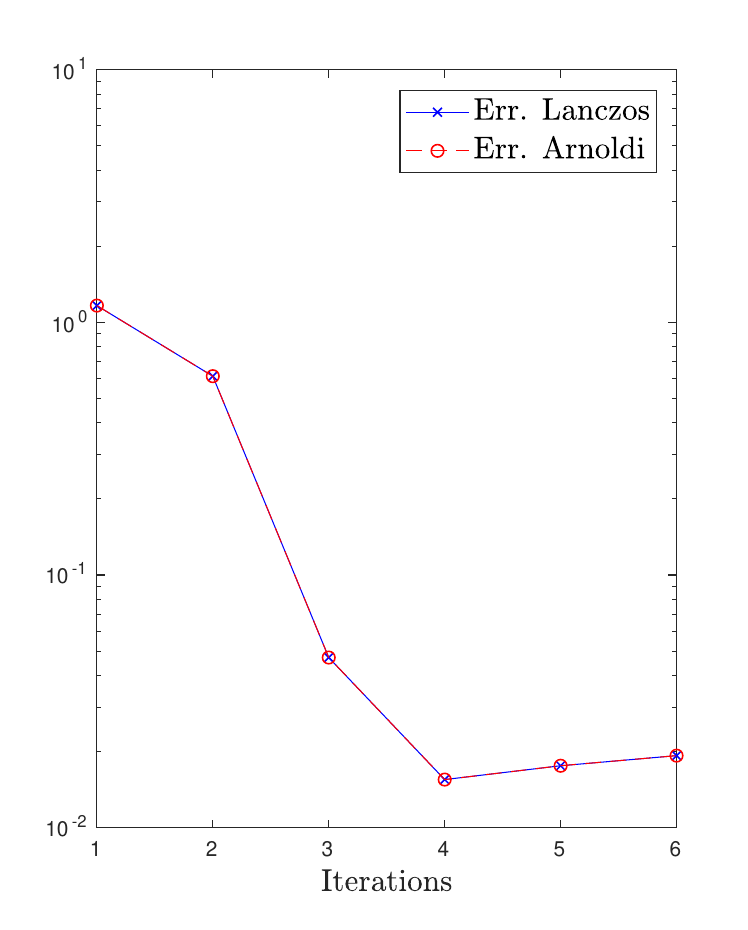}
\hskip -.30in
    \includegraphics[height=.55\textwidth,width=0.525\textwidth]{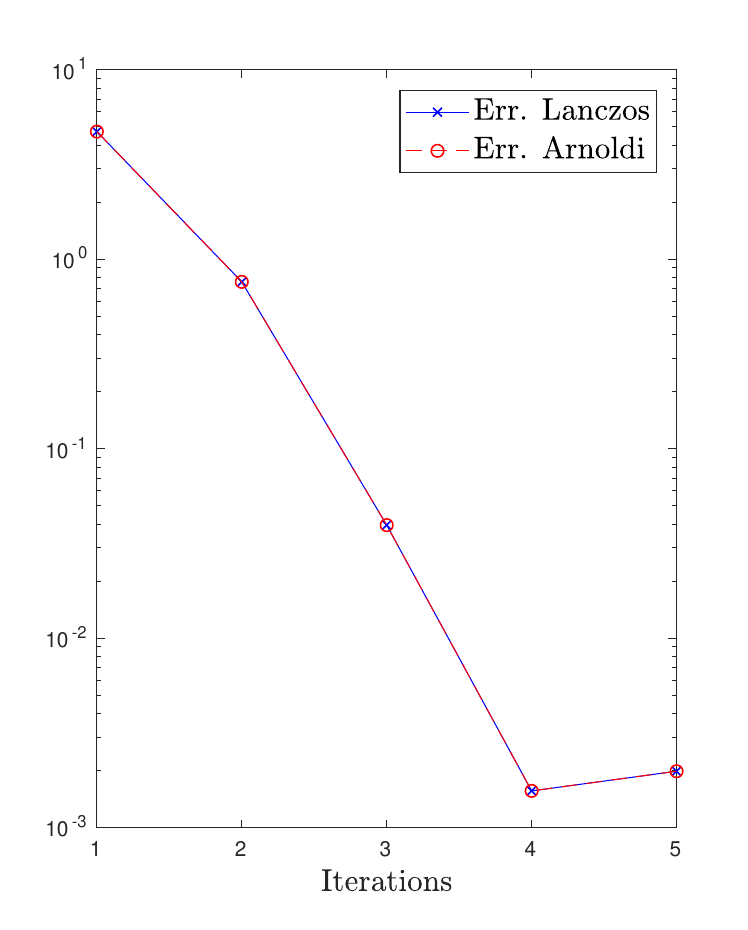}
\vskip -.2in
    \caption{Example~\ref{Ex:trace}. Convergence error history of the trace estimation \eqref{eq:trace:stoch} obtained at each step of the rational block Lanczos/Arnoldi methods. \label{fig1.trace}}
\end{figure}

 Applications related to Gaussian processes often require estimating a parameter 
$\phi$ by maximizing the so-called \emph{log-likelihood function}:
 \begin{equation}\label{eqn:gaussian}
\log(p\, |\, \phi) = \frac{1}{2}\log \det(A(\phi)) - \frac{1}{2}x^T A(\phi) x - \frac{n}{2}\log(2\pi),
\end{equation}
 where the positive definite $n \times n$ matrix $A(\phi)$ is the inverse of the covariance matrix parametrized by $\phi$, and $x$ is a given vector; see, e.g., \cite{UbaruChenSaad.17}.
 Estimating $\log \det(A(\phi))$ constitutes the main computational cost in (\ref{eqn:gaussian}).
The relation
 $\log \det(A(\phi)) $ $= {\rm tr} \log(A(\phi))$ allows one to use
 the stochastic trace estimator in \eqref{eq:trace:stoch} to reduce the 
overall computational cost.  The values $\tau_k$ can be obtained 
by using the block rational Lanczos algorithm to
approximate the bilinear form  $B^T \log(A(\phi)) B$, with $B = [z^{(1)}, \dots,z^{(p)}]$.

\begin{example} \label{Ex:trace}
 {\rm We consider the model in \cite{Pettitt2002}; see also \cite{frommer2020analysis}.
 We generate uniformly distributed random pairs $s_i \in [0,1]^2$, $i=1,\dots,n$, representing points on the real 
plane. A random Gaussian variable is associated with each point $s_i$. The model 
describes the association between random variables observed at fixed sites in the Euclidean space, thus
 imposing a neighborhood structure to the points. More precisely, two points $s_i, s_j$ are associated
 if and only if the Euclidean distance $d_{ij} = \| s_i - s_j\|$ is smaller than a given parameter $\delta > 0$.
 Such a structure defines a planar graph.
 We considered $\phi=20$, while the entries of $A(\phi)$ are defined as follows
 $$ [A(\phi)]_{ij} = \left\{ \begin{array}{ll}
                      1 + \phi \sum_{k=1,\, k \neq i}^{n} \gamma_{ik}, & i = j, \\ 
                      - \phi \gamma_{ij}, & i \neq j, 
                     \end{array} \right.
 $$
 with the so-called \emph{reciprocal choice} \cite{Pettitt2002}
  $$ \gamma_{ij} = \left\{ \begin{array}{ll}
                      1 - {d_{ij}}/{\delta}, & 0 < d_{ij} < \delta, \\ 
                      0, & {\rm otherwise}.
                     \end{array} \right.
 $$
Table~\ref{tab_trace} reports the results obtained with the $Q_m$-less Lanczos and rational Arnoldi algorithms for different values of $n$, $p$, and $\delta$.
%
The sparsity pattern of $A(\phi)$ depends on $\delta$; the larger $\delta$, the denser $A(\phi)$. 
In spite of the possible cost increase in system solves,
the rational Lanczos method turns out to be faster than rational Arnoldi for all the parameters we tested;
see Table~\ref{tab_trace}. This may be related to an increased cost of the full orthogonalization step in rational 
Arnoldi, which seems to suffer the large rank of the matrix $B$.
%
Once again, we used as accuracy measure the error between the computed quantity and
the value obtained by means of the \MATLAB\ {\tt logm} function.

 For each iteration of the rational block Lanczos/Arnoldi algorithm, 
Figure~\ref{fig1.trace} displays the error in the trace approximation for
the rational block Lanczos/Arnoldi algorithms as the iterations proceed. 
The accuracy reached in the last iterations of the examples agrees with the estimated 
achievable accuracy of the stochastic strategy we used; see, e.g., \cite{Mey2021}.
The algorithms {\color{black} behave almost identically in terms of the error}. 

%
}
 
\end{example}

\subsection{$\mathcal{H}_2$-norm computation}\label{H2 norm computation} 
We consider linear, time-invariant (LTI) systems of the form
\begin{equation}\label{eq:LTI}
 \Sigma:\left\{\begin{array}{l}
\dot x(t)= Ax(t)+Bu(t),\quad x(0)=x_0,\\
y(t)=Cx(t),\\
               \end{array}
\right.
\end{equation}
where $A\in\mathbb{R}^{n\times n}$ is stable, that is its spectrum is contained in the left-half open complex 
plane $\mathbb{C}_-$, and  $B\in\mathbb{R}^{n\times p}$, $C\in\mathbb{R}^{q\times n}$ are low rank, i.e., $p+q\ll n$. The $\mathcal{H}_2$-norm of $\Sigma$ is defined as follows 
$$\|\Sigma\|^2_{\mathcal{H}_2}=\text{trace}(C\mathbf{Q}C^T)=\text{trace}(B^T\mathbf{P}B),$$
where $\mathbf{Q}$, $\mathbf{P}\in\mathbb{R}^{n\times n}$ denote the controllability and the observability Gramian, respectively, i.e.,  $\mathbf{Q}$ and $\mathbf{P}$ are the solution of the following Lyapunov equations
$$A\mathbf{Q}+\mathbf{Q}A^T+BB^T=0,\quad A^T\mathbf{P}+\mathbf{P}A+C^TC=0.
$$
See, e.g., \cite[Section 5.5.1]{AntBook05}. 
The $\mathcal{H}_2$-norm gives the maximum amplitude of the system output resulting from
input signals of the LTI system (\ref{eq:LTI}) with finite energy
(see, e.g., \cite{AntBook05,Gugercin2008}), and thus its estimation
is of interest.

For $A$ of large dimension $n$, model order reduction (MOR) is used to
make the dynamical system numerically tractable \cite{AntBook05}, so that the $\mathcal{H}_2$-norm 
can also be more cheaply estimated.
Given a matrix $Q_m$ whose columns span an appropriately chosen reduction space of dimension much
lower than $n$, in MOR the following smaller system is introduced, 

\begin{equation}\label{eq:reduced_model}
 \Sigma_m:\left\{\begin{array}{ll}
 \dot{\widehat x}(t)=J_m\widehat x(t)+B_mu(t),&\widehat x(0)=Q_m^Tx_0,\; J_m=Q_m^TAQ_m,\;  \\
 \widehat y(t)=C_m\widehat x(t),&B_m=Q_m^TB,\; C_m = C Q_m.
\end{array}\right. \
\end{equation}

This reduced system is hopefully able to reproduce the main 
features of the original large scale setting\footnote{Usually two spaces range($Q_m$), range($W_m$)
can also be considered, so that, $J_m=W_m^T A Q_m$, {\color{black} $B_m=W_m^TB$, and $C_m=CQ_m$}. For our problem one reduction space
suffices \cite{AntBook05}.}. Rational Krylov subspaces have proven to be
particularly effective for this task \cite{AntBook05,BCOW.17,GalGriVDo96,Grimme1997}.
For $A$ symmetric, Algorithm~\ref{alg:rational_lanczos} produces the small dimensional matrix $J_m$, while
$B_m=Q_m^TB$ and $C_m=CQ_m$ can be computed incrementally during the iteration, without
explicitly storing $Q_m$.
%
%
After these computations,
the $Q_m$-less rational Lanczos method can be employed to 
cheaply compute $\|\Sigma_m\|_{\mathcal{H}_2}$ as an approximation to $\|\Sigma\|_{\mathcal{H}_2}$. 
In the following we approximate $\|\Sigma\|_{\mathcal{H}_2}=\sqrt{\text{trace}(B^T\mathbf{P}B)}$ and 
we thus focus on the approximation of the observability 
Gramian $\mathbf{P}$. The same procedure can be adopted to 
compute $\|\Sigma\|_{\mathcal{H}_2}=\sqrt{\text{trace}(C\mathbf{Q}C^T)}$ if the latter formulation is preferred.

The rational Krylov subspace method can also effectively be used for solving the associated
Lyapunov equation; see, e.g., \cite{Sim16}. 
%
Given the iteratively generated matrix $Q_m$ for $\mathcal{K}_m(A,C^T,\b{\xi})$, an approximation 
to $\mathbf{P}$ is sought in the form $\mathbf{P}_m=Q_mY_mQ_m^T$, where the reduced matrix $Y_m$ is obtained
by imposing an orthogonality condition on the residual matrix $R_m=A\mathbf{P}_m+\mathbf{P}_mA+C^TC$.
In terms of the Euclidean matrix inner product, this condition can be written as $Q_m^TR_mQ_m=0$.
Substituting $\mathbf{P}_m$ into the residual and using the orthogonality of the columns in $Q_m$
this yields the following
%
reduced Lyapunov equation
$$J_mY_m+Y_mJ_m+E_1\b{\gamma\gamma}^TE_1^T=0,$$
where $J_m=Q_m^TAQ_m$ and $C^T=Q_1\b{\gamma}$ for a nonsigular $\b{\gamma}\in\mathbb{R}^{q\times q}$.
Hence, the small size solution $Y_m$ can be computed by means of a Schur decomposition based strategy \cite{Sim16}.
Using the computed quantities we can write
\begin{align*}
\|\Sigma\|^2_{\mathcal{H}_2}=& \text{trace}(B^T\mathbf{P}B)\approx 
\text{trace}(B^T\mathbf{P}_mB)=
\text{trace}\left(\left(B^TQ_m\right)Y_m\left(Q_m^TB\right)\right) =:
\|\Sigma_m\|^2_{\mathcal{H}_2}.
\end{align*}
All the required quantities can be computed without ever storing the whole matrix $Q_m$.

{\it Stopping criterion.} 
For the $\mathcal{H}_2$-norm computation we propose to check the distance between two 
subsequent norm approximations, that is, for $1\leq s\leq m-1$,
\begin{align}\label{eq:stop_H2}
 \frac{\left|\|\Sigma_m\|_{\mathcal{H}_2} - \|\Sigma_{m-s}\|_{\mathcal{H}_2}\right|}{\|\Sigma_m\|_{\mathcal{H}_2}} =&\frac{\left|\sqrt{\text{trace}\left( B_m^TY_mB_m\right)}-
\sqrt{\text{trace}\left (B_{m-s}^TY_{m-s}B_{m-s} \right)}\right|}{\sqrt{\text{trace} \left( B_m^T
Y_mB_m\right)}}.
\end{align}

%
{\color{black}
The scheme presented in this section can be easily adapted to deal with certain parametric LTI systems like those studied in, 
e.g., \cite{Baur2011}, where only the matrices $B=B(\mu)$ and $C=C(\mu)$ {\em affinely} depend on a parameter $\mu$
  belonging to a given parameter set $\mathcal{D}$.} 

\begin{example}\label{Ex.1}
 {\rm
 We consider the 2D \emph{Optical Tunable Filter} dataset 
available in the {\sc MORwiki} repository~\cite{morWiki} (see also \cite{HohZ04}),
giving the following LTI system 
\begin{equation}\label{eq:general_LTI}
 \Sigma:\left\{\begin{array}{rcl}
E\dot x(t)&=& Ax(t)+Bu(t),\\
y(t)&=&Cx(t),\\
               \end{array}
\right.
\end{equation}
with $n=1668$, $p=1$, and $q=5$.
The mass matrix $E$ is diagonal and positive definite so that we can consider the transformed system
\begin{equation}\label{eq:transformed_LTI}
 \widetilde\Sigma:\left\{\begin{array}{l}
\dot{\widetilde x}(t)= \widetilde A\widetilde x(t)+\widetilde Bu(t),\\
y(t)=\widetilde C\widetilde x(t),\\
               \end{array}
\right. \qquad
\begin{array}{l} \widetilde A=E^{-\frac 1 2}AE^{-\frac 1 2}, \, \widetilde B=E^{-\frac 1 2}B \\
\widetilde C=CE^{-\frac 1 2},\, \widetilde x=E^{\frac 1 2}x.
\end{array}
\end{equation}
%
This transformation does not affect the $\mathcal{H}_2$-norm of the system, since  
$\|\widetilde \Sigma\|_{\mathcal{H}_2}=\|\Sigma\|_{\mathcal{H}_2}$. 
We construct $\mathcal{K}_m(\widetilde A, \widetilde C^T,{\b \xi}_m)$ for the approximation
of $\|\widetilde\Sigma_m\|_{\mathcal{H}_2}\approx\|\widetilde \Sigma\|_{\mathcal{H}_2}$. 
The iterations are stopped as soon as the relative quantity 
in~\eqref{eq:stop_H2} for $s=1$ is smaller than $10^{-8}$. 
Table~\ref{tab1_Ex.1} collects the results for both the $Q_m$-less Lanczos and Arnoldi methods, using
the same shifts.
Thanks to the moderate dimension of the dataset, we are also able to explicitly compute 
the $\mathcal{H}_2$-norm of the full system\footnote{$\|\widetilde\Sigma\|_{\mathcal{H}_2}$ is computed by {\tt norm(sys,2)} where {\tt sys=ss}$(\widetilde A, \widetilde B,\widetilde C,0)$.} $\widetilde \Sigma$. 
Therefore, Table~\ref{tab1_Ex.1} also reports the relative error 
$|\|\widetilde\Sigma\|_{\mathcal{H}_2}-\|\widetilde\Sigma_m\|_{\mathcal{H}_2}|/\|\widetilde\Sigma\|_{\mathcal{H}_2}$.

\begin{table}[t]
\centering
 \begin{tabular}{|r|r|c|c|c|}
\hline
  &It. & $\text{dim}\left(\mathcal{K}_{m+1}(\widetilde A, \widetilde C^T,{\b \xi}_{m+1})\right)$  & Time (secs) &  Rel. Err.\\
\hline
  $Q_m$-less Lanczos & 12 & 65& 0.219 &7.13e-9\\
   Rat. Arnoldi & 12 & 65 &0.199 &7.13e-9\\
\hline
 \end{tabular}
\caption{Example~\ref{Ex.1}. Results for $Q_m$-less rational Lanczos and rational Arnoldi methods
to achieve the prescribed accuracy. Shown are {\color{black}number} of iterations, space dimension,
CPU time and relative error.\label{tab1_Ex.1}}
\end{table}

The results in Table~\ref{tab1_Ex.1} show a very similar behavior for the $Q_m$-less rational Lanczos and 
rational Arnoldi methods.
The Lanczos method allows us to store only 3 basis blocks, namely 15 vectors of length $n$, 
instead of the whole basis as in the rational Arnoldi method. In terms of CPU time,
solving $2q$ linear systems per iteration in the Lanczos approach, instead of only $q$ systems in rational Arnoldi,
does not lead to a remarkable increment in the computational efforts.

%
%
}
\end{example}

\begin{example}\label{Ex.2}
 {\rm
We consider yet another dataset from~\cite{morWiki},  the 3D \emph{Gas Sensor} example. 
The LTI system has the form~\eqref{eq:general_LTI} with a diagonal positive definite mass matrix $E$, hence
the transformed LTI system in~\eqref{eq:transformed_LTI} is employed.
 In this example we have $n=66917$, $p=1$, and $q=28$. The {\color{black}large} problem dimension $n$ does not 
allow for the computation of the $\mathcal{H}_2$-norm of the full system $\widetilde\Sigma$,
 hence only the approximation $\|\widetilde\Sigma_m\|_{\mathcal{H}_2}$ is computed.
Since $p$ is significantly smaller than $q$ (the number of columns of $B$ and $C^T$, resp.), we proceed by 
constructing $\mathcal{K}_m(\widetilde A,\widetilde B,{\b \xi}_m)$ and then we compute
$\|\widetilde\Sigma_m\|_{\mathcal{H}_2}^2=\text{trace}(\widetilde C\mathbf{\widetilde Q}_m\widetilde C^T)$. 
Both methods are stopped as soon as~\eqref{eq:stop_H2} for $s=1$ becomes smaller than $10^{-8}$.
%
%
%
 Table~\ref{tab1_Ex.2} collects the results. Also for this example the rational Lanczos and Arnoldi 
methods perform similarly. This means that the cost per iteration of the 
two schemes is rather similar. On the other hand, for problems where a larger number of 
iterations is required to converge, the computational cost of the Arnoldi algorithm may 
significantly increase due to the explicit full orthogonalization.
 
\begin{table}[t]
\centering
 \begin{tabular}{|r|r|c|c|}
\hline
  &It. & $\text{dim}\left(\mathcal{K}_{m+1}(\widetilde A, \widetilde B,{\b \xi}_{m+1})\right)$  & Time (secs)   \\
\hline
  $Q_m$-less Lanczos & 19 &  20& 30.5  \\
   Rat. Arnoldi & 19 & 20 &  28.8\\
\hline
 \end{tabular}
\caption{Example~\ref{Ex.2}. Results for $Q_m$-less rational Lanczos and rational Arnoldi methods
to achieve the prescribed accuracy. Shown are the {\color{black}number} of iterations, the space dimension and
the CPU time.\label{tab1_Ex.2}}
\end{table}
 
 }
 \end{example}

\subsection{LQR feedback control}\label{LQR feedback control} We consider once
again LTI systems $\Sigma$ of the form~\eqref{eq:LTI}, and we investigate the efficient 
computation of a different quantity related to the so-called linear-quadratic regulator (LQR) problem.
Given the LTI system~\eqref{eq:LTI} with a stable $A$, this can be stated as
$$u_*=\argmin_{u}\mathcal{J}(u), \,\, {\rm with} \,\, 
\mathcal{J}(u)=\int_{0}^{\infty}y(t)^Ty(t)+u(t)^TR^{-1}u(t)dt,$$
where ${\mathcal J}$ is a quadratic cost functional and $R$ is a $p\times p$ symmetric and positive definite matrix. 
Since $A$ is stable, this $u_*$ exists and is given by 
$u_*(t)=-Kx(t)=-R^{-1}B^T\mathbf{X}x(t)$,
where $\mathbf{X}\in\mathbb{R}^{n\times n}$ is the unique positive semidefinite and
stabilizing\footnote{That is, $A-BR^{-1}B^T\mathbf{X}$ is a stable matrix.}  solution of the following Riccati equation
\cite{Mehrmann1991}
\begin{equation}\label{eq:Riccati}
 A^T\mathbf{X}+\mathbf{X}A-\mathbf{X}BR^{-1}B^T\mathbf{X}+C^TC=0.
\end{equation}
%
Using $u_*$ the first equation in \eqref{eq:LTI} can be written in terms of a \emph{closed-loop} dynamic
\begin{equation*}
\dot x(t)=(A-BR^{-1}B^T\mathbf{X})x(t), \qquad x(0)=x_0,
\end{equation*}
%
whose solution is given by
$x(t)=\text{exp}((A-BR^{-1}B^T\mathbf{X})t)x_0$ for $t\geq0$.
Therefore, 
\begin{equation}\label{eq:optimalu}
 u_*(t)=-R^{-1}B^T\mathbf{X}\cdot\text{exp}((A-BR^{-1}B^T\mathbf{X})t)x_0 .
\end{equation}
In the following we show that for $A$ symmetric an approximation to $u_*$ can be cheaply obtained by 
the $Q_m$-less rational Lanczos method; see also \cite{Alla.Kalise.Simoncini.21} for related results. 

Rational Krylov subspaces have appeared to lead to competitive methods
for solving large-scale Riccati equations; see, e.g., \cite{Simoncini2016a} and references therein.
In case a projection approach is employed,
the overall scheme is very similar to the one reported in section~\ref{H2 norm computation} for Lyapunov equations.
Once again, an approximate solution is sought in the form $\mathbf{X}_m=Q_mY_mQ_m^T$, where
 $\text{Range}(Q_m)=\mathcal{K}_m(A,C^T,\b{\xi})$, while $Y_m$ is computed, for instance, by imposing an orthogonality
(Galerkin) condition on the residual matrix. 
Explicitly imposing this condition and exploiting the property $Q_m^TQ_m=I$
determines a {\it reduced} Riccati equation to be
solved in $Y_m$ (see \cite{Binietal.book.12}), that is
\begin{equation}\label{eq:Riccati_projected}
J_mY_m+Y_mJ_m-Y_mB_mR^{-1}B_m^TY_m+E_1\b{\gamma\gamma}^TE_1=0,
\end{equation}
where $B_m=Q_m^TB_m$ and $\b{\gamma}\in\mathbb{R}^{q\times q}$ is such that $C^T=Q_1\b{\gamma}$. 
Since $A$ is stable and symmetric, 
the matrix $J_m$ is also stable so that $Y_m$ exists and it is the unique 
positive semidefinite stabilizing solution to \eqref{eq:Riccati_projected}. 

Algorithm~\ref{alg:rational_lanczos} can be employed to construct the equation (\ref{eq:Riccati_projected}) for
a growing $m$, where the rows of $B_m$ are computed iteratively during the recurrence.
At the $m$-th iteration an approximation $u_m$ to $u_*$ is obtained as 
\begin{align}\label{eq:um}
u_m(t)=&R^{-1}B_m^TY_m\,\,\text{exp}((J_m-B_mR^{-1}B_m^TY_m)t)(Q_m^Tx_0)\approx u_*(t),
\end{align}
which does not require storing the whole $Q_m$, since $Q_m^Tx_0$ can also be constructed iteratively as $m$ grows.
Thanks to the stability of $J_m$, it can be shown that
the function $u
_m(t)$ defined in~\eqref{eq:um} is indeed the optimal control of the reduced model~\eqref{eq:reduced_model}, 
namely (~\cite[Corollary 3.2]{Simoncini2016a})
$$u_m=\argmin_u\mathcal{\widehat J}(u), \quad \mathcal{\widehat J}(u)=\int_0^\infty \widehat y(t)^T\widehat y(t)+u(t)^TR^{-1}u(t)dt.$$

{\it Stopping criterion.} 
%
The $L^2-$distance between two iterates can be employed as a measure to 
assess the quality of the computed approximation $u_m(t)$, 
\begin{equation}\label{eq:feedback_error}
  \frac{\|u_{m}-u_{m-s}\|_{L^2}^2}{\|u_{m}\|_{L^2}^2}=\frac{\int_0^\infty\|u_{m}(\tau)-u_{m-s}(\tau)\|^2d\tau}{\int_0^\infty\|u_{m}(\tau)\|^2d\tau}, \quad s\in{\mathbb N},\; 0< s < m.
\end{equation}
This quantity can be cheaply approximated because it only involves  small dimensional quantities.
Furthermore, since $J_m-B_mR^{-1}B_mY_m^T$ is a stable matrix, it holds that
$\text{exp}((J_m-B_mR^{-1}B_mY_m^T)t)\rightarrow 0$ as $t\rightarrow\infty$ 
which may lead to an exponential convergence of the quadrature formula adopted to 
approximate \eqref{eq:feedback_error}; see, e.g., \cite{Boyd1987}.

\begin{example}\label{Ex.3}
 {\rm
We consider data in~\cite[Test 1]{Alla2018}. The matrix $A\in\mathbb{R}^{n\times n}$ amounts to the 5-point finite differences discretization of the 2D Laplacian operator in the unit square~$[0,1]^2$ with zero Dirichlet boundary conditions, namely $A=1/(\bar n-1)^2\cdot(T\otimes I_{\bar n}+I_{\bar n}\otimes T)$, $T=\text{tridiag}(1,-2,1)\in\mathbb{R}^{\bar n \times \bar n}$, $n=\bar n^2$. The vector $B\in\mathbb{R}^{n}$ is such that the matrix $BB^T$ corresponds to the discrete indicating function related to the square $[0.2,0.8]^2$. Similarly, $C^TC$ with  $C\in\mathbb{R}^{1\times n}$ amounts to the discrete indicating function of 
$[0.1,0.9]^2$. We select $R=1$ and $x_0=1/(\bar n-1)\cdot\mathbf{1}_{n}$ where $\mathbf{1}_{n}\in\mathbb{R}^{n}$ is the vector of all ones.

We compare the $Q_m$-less rational Lanczos and Arnoldi methods for
the computation of the approximate optimal control $u_m(t)$ in~\eqref{eq:um}. Both 
schemes are stopped as soon as the value in~\eqref{eq:feedback_error} for $s=4$ is smaller 
than $10^{-8}$.
%
The results in Table~\ref{tab1_Ex.3} show that the two methods perform similarly in terms of 
convergence trend and computational efficiency. 
Figure~\ref{fig1.ex3} also reports the convergence history of the two schemes for different values of $n$, illustrating
that the lack of a full orthogonalization procedure does not affect the convergence of 
the rational Lanczos method for this example. See section~\ref{sec:fpa} for a broader discussion on this topic.

\begin{table}[t]
\centering
 \begin{tabular}{|c|c|c|c|c|}
\hline
   $n$ &It. & $\text{dim}\left(\mathcal{K}_{m+1}(A, C^T,{\b \xi}_{m+1})\right)$  & Method & Time (secs)   \\
\hline
    \multirow{2}{*}{40 000}  & \multirow{2}{*}{25} & \multirow{2}{*}{26}& $Q_m$-less Lanczos& \,\, 2.7  \\
    &&& Rat. Arnoldi & \,\,2.6\\ 
   \hline
   
    \multirow{2}{*}{160 000}  & \multirow{2}{*}{29} & \multirow{2}{*}{30}& $Q_m$-less Lanczos& 11.7  \\
    &&& Rat. Arnoldi & 11.8\\ 
   \hline
 
    \multirow{2}{*}{360 000}  & \multirow{2}{*}{29} & \multirow{2}{*}{30}& $Q_m$-less Lanczos& 28.3  \\
    &&& Rat. Arnoldi & 27.3\\ 
   \hline
 \end{tabular}
\caption{Example~\ref{Ex.3}. Number of iterations of the rational Lanczos/Arnoldi method needed to achieve the prescribed accuracy along with the dimension of the computed subspace and the running time in seconds.}\label{tab1_Ex.3}
\end{table}

\begin{figure}[t]
    \centering
    \includegraphics[width=13cm]{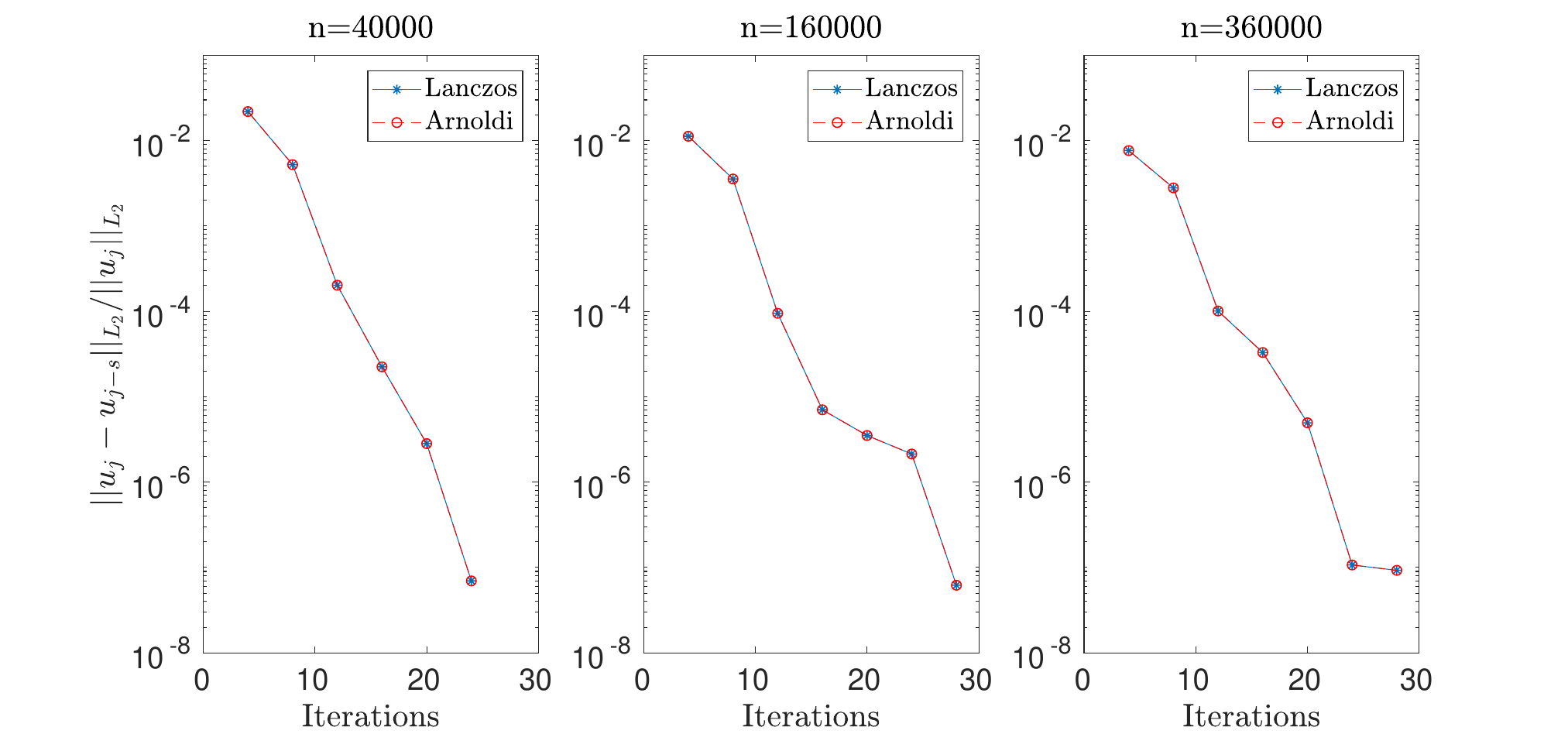}
\vskip -.10in
    \caption{Example~\ref{Ex.3}. Convergence history of the rational Lanczos and Arnoldi methods for different values of $n$ and $s=4$.}\label{fig1.ex3}
  \end{figure}

 }
 \end{example}


\section{Considerations on finite precision arithmetic computations}\label{sec:fpa}
The po-\ lynomial Lanczos iteration is known to be prone to numerical instabilities, which
cause loss of orthogonality in the computed basis. This fact has been deeply investigated
by Paige in his {\color{black}seminal} PhD thesis \cite{Pai71} and successive 
works, see, e.g., \cite{Pai76}. 
Quoting \cite[p.~108]{Simon1984}, loss of orthogonality can be viewed as the result of an
amplification of each local error after its introduction into the computation,
and its growth is determined by the eigenvalue distribution of $A$ and by the starting vector $v$.

The rational Lanczos sequence is tightly related to its polynomial counterpart,
with the added difficulty of the system solves, whose finite precision arithmetic
computations may significantly increase the perturbation induced by round-off, even 
assuming a stable direct solver is employed. Notably,
to the best of our efforts, we could not find in the literature a round-off errors 
analysis for rational Krylov subpace computations
within the considered applications.
{\color{black} In this section we introduce very preliminary considerations on the 
round-off perturbation occurred in the computed quantities during the iteration.
We also report on our numerical experience, showing that the type of
loss of orthogonality in the basis seems to be similar to that analyzed in the polynomial
Lanczos method over several decades. We are conscious that {performing} a satisfactory
quantitative analysis requires sophisticated tools that go beyond our current presentation.
Hopefully, these preliminary results may be useful in a deeper analysis.}

At iteration $j$, the generation of the rational Krylov orthonormal
basis requires the following computations:
\begin{eqnarray*}
	\widetilde r&=&A q_j-\beta_{j-1}(I-\xi_{j-2}^{-1} A) q_{j-1}, \qquad
\widetilde s=(I-\xi_{j-1}^{-1} A) q_j \\
	r&=&(I- \xi_j^{-1} A)^{-1}\widetilde r, \quad s=(I- \xi_j^{-1} A)^{-1}\widetilde s \\
	q&=&r-\alpha_j s, \qquad {\rm with} \quad \alpha_j=\frac{r^T q_j}{s^T q_j}\\
    q_{j+1}&=&q/\beta_j, \quad \beta_j=\|q\|.
\end{eqnarray*}
Using standard assumptions on finite precision arithmetic computations, and assuming a
backward stable method is used for solving with the symmetric and positive definite
matrix $I- \xi_j^{-1} A$, we conjecture that an Arnoldi-type relation similar to that
in the results of \cite{Pai76} holds,  that is
$$
A Q_jK_j = Q_j H_j + (I - \xi_j^{-1} A) q_{j+1} e_j^T  +  F_j,
$$
where the matrix $F_j$ collects all round-off terms during the iteration. Here we envision
that the columns of $F_j$ will have an increasing norm as $j$ grows, according with the error accumulation
argument known for the polynomial Lanczos. We stress that $K_j$ and $H_j$ are not the
same as those computed in exact arithmetic, and that the columns of $Q_j$ are no longer 
exactly orthonormal, however we can assume that $v=Q_j e_1$.  
Although our conjecture seems to be confirmed by 
numerical experiments, a rigorous analysis leading to upper bounds for the 
elements in $F_j$ would be desirable, though it goes beyond the aim of this work.
%
{\color{black}A} different approach to the understanding of the finite precision
arithmetic behavior could also follow the {\color{black}backward error}
analysis introduced by Greenbaum for the Lanczos method 
\cite{Greenbaum.89}.
{\color{black} In particular, this approach may enlighten the interplay 
between the distribution of the eigenvalues of $A$ and the shifts $\xi_j$, 
leading to a better understanding of the rational Lanczos convergence behavior 
in finite precision arithmetic.}

\vskip 0.1in

\begin{remark}\label{rem:exactJ}
In finite precision it no longer holds that $J_j = Q_j^TA Q_j$, and one could even question the
	symmetry of the computed $J_j$. However, since we are mainly concerned
	with the loss of accuracy in the computation of the {\color{black}length-$n$} vectors, 
we can assume that
	$K_j^{-1}$ and $J_j:= H_j K_j^{-1} - w e^T K_j^{-1}$ are computed accurately,
	with $w$ as in the discussion after (\ref{eqn:Jj+1}), so that
$w^T=u^T \beta_j/\xi_j = \beta_j/\xi_j(\alpha_{j+1} - \eta \beta_j/\xi_j)e_j^TK_j^{-1}=: \upsilon_j e_j^TK_j^{-1}$. 
We thus have
$$
J_j = ( H_j - \upsilon_j K_j^{-T} e_j e_j^T) K_j^{-1} = H_j K_j^{-1} - \upsilon_j K_j^{-T} e_j e_j^T K_j^{-1} ,
$$
with $H_j K_j^{-1}$ symmetric. Hence, $J_j$ remains symmetric also in finite precision arithmetic,
	as long as all quantities are determined using the computed coefficients.  
\end{remark}

\vskip 0.1in

According to Remark~\ref{rem:exactJ}, we thus assume 
that $J_j = H_j K_j^{-1} - \upsilon_j K_j^{-T} e_j e_j^T K_j^{-1}$  and $K_j^{-1}$
are computed exactly.
We can then write the perturbed relation as 
\begin{eqnarray*}
A Q_j &=& Q_j H_jK_j^{-1} + (I - \xi_j^{-1} A) q_{j+1} e_j^T K_j^{-1} +  F_jK_j^{-1} \\
&=& Q_j J_j + [ Q_j \upsilon_j K_j^{-T} e_j +(I-\xi_j^{-1}A) q_{j+1} ] e_j^T K_j^{-1} +  F_jK_j^{-1} \\
 &=:& Q_j J_j + z_j e_j^T K_j^{-1} +  F_jK_j^{-1}.
\end{eqnarray*}
%
{Subtracting $\zeta Q_j$ for $\zeta\in\CC$ such that $A- \zeta I$ and $J_j- \zeta I$ are nonsingular, we obtain
$$
(A- \zeta I) Q_j = Q_j (J_j-\zeta I) + z_j e_j^T K_j^{-1} +  F_jK_j^{-1}.
$$
Multiplying by $(A- \zeta I)^{-1}$ and $(J_j-\zeta I)^{-1}$, and rearranging the relation above we obtain} 
$$
(A- \zeta I)^{-1} Q_j = Q_j (J_j-\zeta I)^{-1} - (A- \zeta I)^{-1}(z_j e_j^T K_j^{-1} +  F_jK_j^{-1} ) (J_j-\zeta I)^{-1} .
$$
We analyze the effect of this computation in the approximation of the quadratic
form $v^T f(A) v$, for which the short-term recurrence seems to 
work particularly well (see section~\ref{sec:bilinear}); see, e.g., \cite{Druskin1998} for a related 
analysis of the polynomial Lanczos method. 
Using $f(A)  = \int_\Gamma f(\zeta) (A-\zeta I)^{-1}  d\zeta$ we can write
\begin{eqnarray*}
v^T f(A) v  &=&  \int_\Gamma f(\zeta) v^T ( (A-\zeta I)^{-1} Q_j) e_1  d\zeta  \\
&=&  \int_\Gamma f(\zeta) v^T Q_j (J_j-\zeta)^{-1} e_1 d\zeta \\
&&  -  v^T \int_\Gamma f(\zeta) (A- \zeta I)^{-1}(z_j e_j^T + F_j) K_j^{-1} (J_j-\zeta I)^{-1}e_1  d\zeta .
\end{eqnarray*}
Therefore,
$$
v^T f(A) v = e_1^T Q_j^T Q_j f(J_j) e_1  
 -  v^T \int_\Gamma f(\zeta) (A- \zeta I)^{-1}(z_j e_j^T + F_j) K_j^{-1} (J_j-\zeta I)^{-1} e_1 d\zeta .
$$
In exact arithmetic it would hold that $Q_j^T Q_j=I_j$ so that the quantity $e_1^T f(J_j) e_1$
would correspond to the classical approximation in the given subspace. 
In finite precision arithmetic the distance from the ideal quantity can be estimated as follows
{
\begin{eqnarray*}
|v^T f(A) v - e_1^T f(J_j) e_1| & \le &
|e_1^T f(J_j) e_1-q_1^T Q_j f(J_j) e_1| \\
 && + \left| \int_\Gamma f(\zeta) v^T (A- \zeta I)^{-1}z_j e_j^T K_j^{-1} (J_j-\zeta I)^{-1}e_1  d\zeta \; \right| \\
 && +  \left| \int_\Gamma f(\zeta) v^T (A- \zeta I)^{-1} F_j K_j^{-1} (J_j-\zeta I)^{-1}e_1  d\zeta \; \right|  \\
&=:& \mathcal I + \mathcal I\mathcal I +\mathcal I\mathcal I\mathcal I.
\end{eqnarray*}
}
We next analyze the right-hand side terms. Let $q_1^T Q_j = e_1 + \b{\epsilon}_j$, where
we can assume that $e_1^T \b{\epsilon}_j=0$ (exact normalization) 
while the quantity $|e_k^T\b{\epsilon}_j|$ may grow with $k$. Then
\begin{eqnarray}
|e_1^T f(J_j) e_1-q_1^T Q_j f(J_j) e_1| & = &
|{\bm\epsilon}_j^T f(J_j) e_1| \nonumber\\
&=& |\sum_{\ell=1}^j ({\bm\epsilon}_j)_\ell (f(J_j) e_1)_\ell| 
\le \sum_{\ell=1}^j | ({\bm\epsilon}_j)_\ell| \, |(f(J_j) e_1)_\ell|. \label{eqn:error_bilin}
\end{eqnarray}
In exact arithmetic, it has been proved that $|e_\ell^T f(J_j) e_1|$ shows a
 decaying behavior - {\color{black}which is} possibly exponential - as $\ell$ grows, and the slope 
depends on the spectral
properties of the coefficient matrix \cite{Pozza.Simoncini.20tr}. 
If this property is maintained in finite precision arithmetic,
then each term $| ({\bm\epsilon}_j)_\ell|$ is allowed to grow as long as the
product $| ({\bm\epsilon}_j)_\ell| \, |(f(J_j) e_1)_\ell|$ remains small.

{\color{black} 
Assuming that $K_j$ is computed exactly, 
the term $\mathcal I\mathcal I$ 
does not involve perturbation matrices, therefore its magnitude}
is related to the quality of the
rational Krylov space approximation in exact precision arithmetic.
Lastly, the term $\mathcal I\mathcal I\mathcal I$ 
shows that the columns of the round-off error matrix are
 {\em weighted} by the components of the vector $K_j^{-1} (J_j-\zeta I)^{-1} e_1$. 
By using the definition of $J_j$ and $K_j$, it follows that $(J_j-\zeta I)K_j = H_j - \zeta K_j - w_j e_j^T$,
which is a tridiagonal plus a rank-one matrix acting on the last column.
Setting $G_j= H_j - \zeta K_j$ with $G_j=G_j(\zeta)$ and using the Sherman-Morrison formula, we get
 $K_j^{-1} (J_j-\zeta I)^{-1} e_1 = G_j^{-1} e_1 + G_j^{-1} w_j (e_j^T G_j^{-1}e_1)/(1-e_j^T G_j^{-1}w)$.
If $G_j$ has convenient spectral properties, then the components of the vector $G_j^{-1}e_1$ have a decaying magnitude, while
the magnitude of the second term in the formula depends on $e_j^T G_j^{-1}e_1$, the last component of the first vector.
Hence, in this case the propagated errors
contained in the rightmost columns of $F_j$ are weighted by small values. As a consequence,
the round-off effect appears to be mitigated. Although we have experimental evidence that round-off
errors seem to only slightly affect computations, a thorough analysis is needed to make more
definitive statements.

\begin{remark}
{\color{black}For $f(z)=z^{-1}$ the polynomial Lanczos analysis is 
closely related to that of the conjugate gradient (CG) method for solving linear systems.
We refer the reader to the monograph \cite{LieStrBook13} and its references for a thorough discussion.}
\end{remark}

\begin{figure}[htbp]
 \centering
\includegraphics[height=.60\textwidth,width=0.495\textwidth]{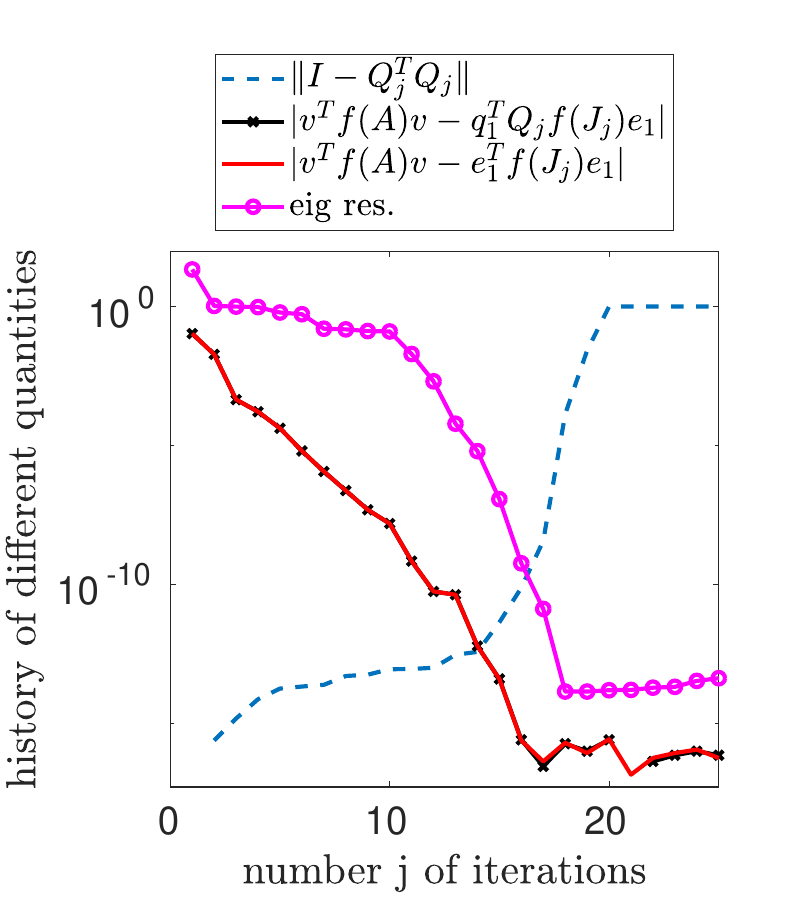}
\includegraphics[height=.60\textwidth,width=0.495\textwidth]{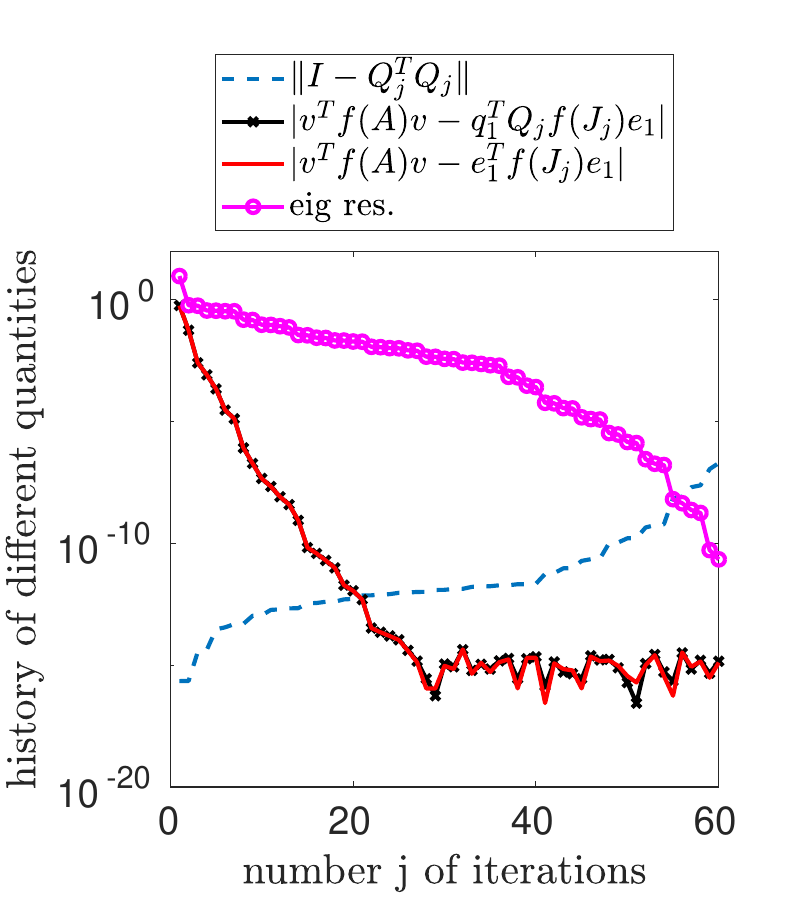}
\vskip -.10in
  \caption{ Example~\ref{ex:fpa1}. Convergence history of approximation to $v^T A^{1/2} v$.
Left: $\rho=0.45$. Right: $\rho=0.85$.  \label{fig:conv}}
\end{figure}

\begin{example}\label{ex:fpa1}
{\rm We consider an example first introduced in \cite{Strakos.91}. The matrix is diagonal with
eigenvalues equal to
$\lambda_i = \lambda_1 + (i-1)/(n-1) (\lambda_1-\lambda_n) \rho^{n-i}$, $i=1, \ldots, n$;
the parameter $\rho$ is used to control the eigenvalue distribution in the spectral
interval, so that a value of $\rho$ close to one distributes the eigenvalues almost
uniformly in the interval. We considered $n=900$, $\lambda_1=0.01$ and $\lambda_n=100$, 
	together with $f(\lambda)=\lambda^{1/2}$.
Moreover, we analyzed two values of $\rho$, that is $\rho=0.45$ and $\rho=0.85$.
The plots in Figure~\ref{fig:conv} show the error $|v^T f(A)v - e_1^Tf(J_j)e_1|$
together with the loss of orthogonality $\|I-Q_j^TQ_j\|$ and the true 
approximation error $|v^T f(A)v - q_1^TQ_jf(J_j)e_1|$ as the number $j$ of
iterations increases. Plots are reported for $\rho=0.45$ (left)
and $\rho=0.85$ (right). The eigenvalue residual norm 
$\|A x^{(j)} - x^{(j)}\lambda^{(j)}\|/|\lambda^{(j)}|$ is also shown, where
$(\lambda^{(j)}, x^{(j)})$ is the Ritz eigenpair 
with $\lambda^{(j)}$ closest to $\lambda_n$.
{\color{black}Similarly to the polynomial Lanczos method, 
for both values of $\rho$}
loss of orthogonality
is related to the convergence of the Ritz eigenpair to {\color{black}the corresponding eigenpair} of $A$. 
Concerning the bilinear form,
we first remark that the convergence to $v^T f(A)v$ is not  consistently
related to that of the Ritz eigenpair, 
and in addition convergence seems to be insensitive to the fact that $q_1^TQ_j \ne e_1$, that is,
${\bm\epsilon}_j \ne 0$.
Though convergence is slower for $\rho=0.85$ than for
	$\rho=0.45$, the last property is maintained in both cases.
}
\end{example}

In the previous example we illustrated that the accuracy obtained by $q_1^TQ_jf(J_j)e_1$
is {\color{black} similar} to that of $e_1^Tf(J_j)e_1$, and this is related
to the role of $|\sum_{\ell=1}^j ({\bm\epsilon}_j)_\ell (f(J_j) e_1)_\ell|$ in the
discussion above. The next example investigates this issue further.

\begin{figure}[htbp]
 \centering
\includegraphics[height=.6\textwidth,width=0.495\textwidth]{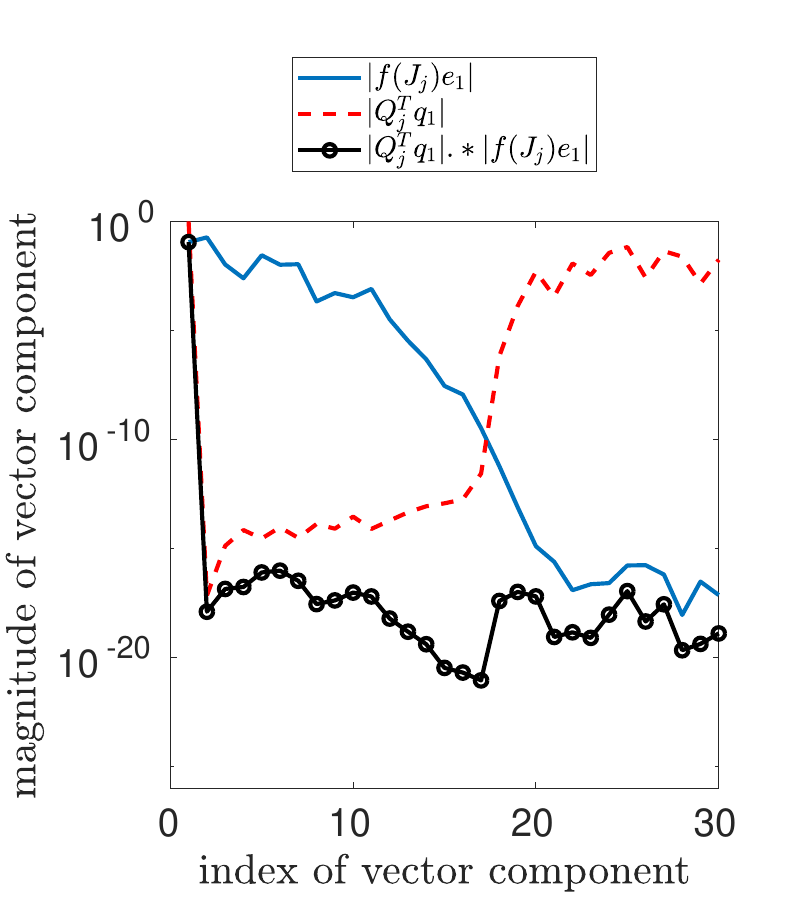}
\includegraphics[height=.6\textwidth,width=0.495\textwidth]{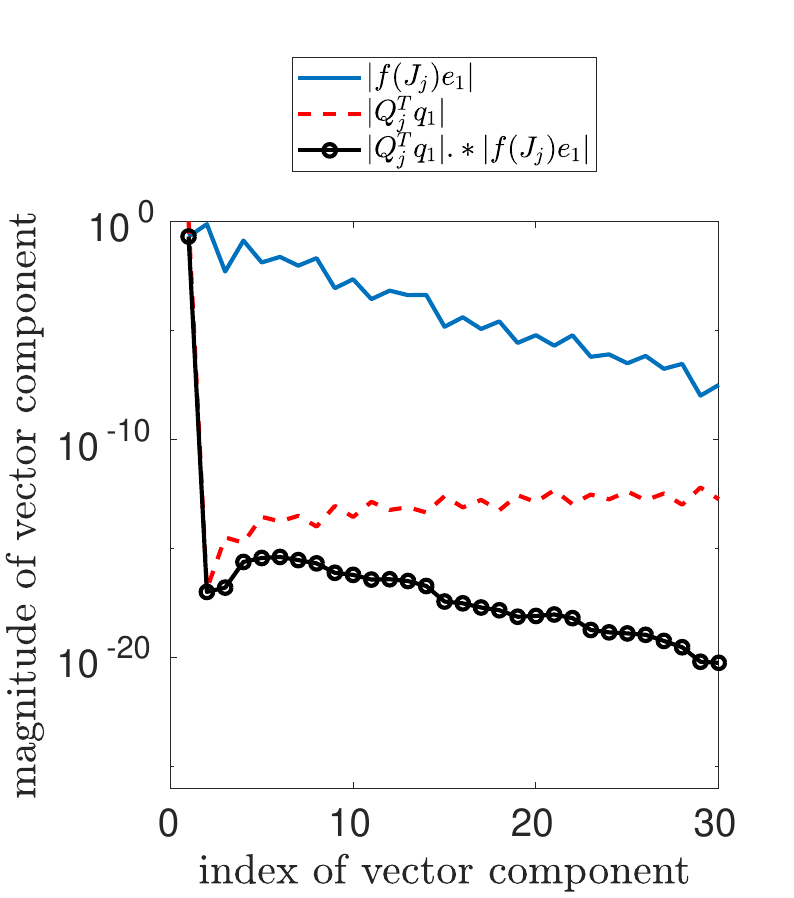}
  \caption{Example~\ref{ex:fpa2}. Magnitude of components at iteration $j=30$ in the 
	approximation to $v^T A^{1/2} v$. Left: $\rho=0.45$. Right: $\rho=0.85$. \label{fig:orth}}
\end{figure}

\begin{example}\label{ex:fpa2}
{\rm
With the same data as in {\color{black} Example~\ref{ex:fpa1}}, we focus on iteration $j=30$ and
inspect the magnitude of the {\color{black}components} of the vectors $f(J_j) e_1$ and
$q_1^TQ_j$, which give the factors in the 
sum $|\sum_{\ell=1}^j ({\bm\epsilon}_j)_\ell (f(J_j) e_1)_\ell|$.
The two plots in Figure~\ref{fig:orth} ($\rho=0.45$ on the left and $\rho=0.85$
on the right) report the quantities $|f(J_j) e_1|_\ell$, 
$|q_1^TQ_j|_\ell$ and $|q_1^TQ_j|_\ell |f(J_j) e_1|_\ell$ for
$\ell=1, \ldots, 30$.
The two figures consistently report that the increasing pattern of $|q_1^TQ_j|_\ell$ 
inversely matches the decreasing one of $|f(J_j) e_1|_\ell$, so that the
product of each component remains at the level of $10^{-15}$.
If $J_j$ were exact, the decay pattern of $|f(J_j) e_1|_\ell$ would be expected,
{\color{black}thanks} to the theoretical results reported in \cite{Pozza.Simoncini.20tr}.
{\color{black} 
The fact that round-off error does not seem to significantly alter the
interplay between $|q_1^TQ_j|_\ell$ and $|f(J_j) e_1|_\ell$ 
could be related to the theory developed by Greenbaum \cite{Greenbaum.89}.
For the polynomial Lanczos method,
Greenbaum showed that the computed projected matrix can be expressed as 
the output of the Lanczos iteration
applied {\it in exact arithmetic} not to $A$ but to a matrix $\tilde A$ of larger dimensions.
In particular the eigenvalues of $\tilde A$ cluster around those of $A$.
If an analogous result could be derived for the projected matrix $J_j$ computed by the
rational Lanczos  method, then the result in \cite{Pozza.Simoncini.20tr} would 
ensure that the decay pattern of $|f(J_j) e_1|_\ell$
also holds in finite precision. This would justify the persistency of the 
interplay between $|q_1^TQ_j|_\ell$ and $|f(J_j) e_1|_\ell$ in finite precision arithmetic.
We will address this intriguing issue  in future research.}


}

\end{example}

{\color{black} 
\subsection{\color{black}Stability of the LU factorization of $K_j$}\label{sec:LUstab}
{\color{black}In section~\ref{On the computation of J} it was stated
that the LU factorization  $K_j= L_jU_j$ with no pivoting exists.
In this section we analyze its stability properties.}
%
%
%
Assuming that $K_j$ is computed exactly, 
{\color{black} the stability of the LU factorization will
ensure that the recurrences \eqref{eqn:yt} and \eqref{eqn:diagU} are backward stable.
To this end, we write
$$
K_j = 
\begin{bmatrix}
1 & 0 \\
\frac{\beta_1}{\xi_1} & \widetilde K_j 
\end{bmatrix} 
\quad {\rm with}\,\,
\widetilde K_j =
\widetilde D_{j-1}^{-1} \widetilde{T}_j,
\quad
\widetilde{T}_j := 
\begin{bmatrix}
 \xi_1\hspace{-2pt}+\hspace{-2pt}{\alpha_2} & {\beta_2}    \\
  {\beta_2} &\xi_2\hspace{-2pt}+\hspace{-2pt}{\alpha_3}  &  \hspace{+2pt}\ddots & \\
        & \hspace{-2pt}\ddots  & \ddots  & {\beta_{j-1}} \vspace{4pt} \\
    &       & {\beta_{j-1}}  & \xi_{j-1}\hspace{-2pt}+\hspace{-2pt}{\alpha_j} \vspace{4pt}
\end{bmatrix} ,
$$
where $\widetilde{T}_j\in\RR^{(j-1)\times (j-1)}$
and $\widetilde{D}_{j-1} := \text{diag}(\xi_1,\dots,\xi_{j-1})$. 
}


{
\begin{lemma}\label{lemma:K}
Let $j_*\le n$ be the first index such that $\beta_{j_*}=0$, giving subspace invariance in
(\ref{eq:rat:lanc:mtx}).
Using the notation of Section \ref{On the computation of J}, {\color{black}let $A$ be symmetric and
positive definite,  and the shifts $\xi_1,\ldots, \xi_{j_*-1}$ all be negative. 
Then the symmetric matrices $H_j$ and $\widetilde{T}_j$ are both positive definite, for $j\le j_*$.
Vice-versa, if $A$ is symmetric and negative definite with positive shifts, then 
$H_j$ is negative definite while $\widetilde{T}_j$ is positive definite.}
\end{lemma}
}

\begin{proof}
{\color{black}
We prove the result for $A$ negative definite; the positive definite case follows similarly.
We have
$J_{j_*} = H_{j_*} K_{j_*}^{-1}$ so that 
$ J_{j_*} K_{j_*} = J_{j_*} + J_{j_*} D_{{j_*}-1}^{-1} H_{j_*} = H_{j_*}$,
from which 
$$ 
H_{j_*} = (I_{j_*} - J_{j_*} D_{{j_*}-1}^{-1})^{-1} J_{j_*} = (J_{j_*}^{-1} - D_{{j_*}-1}^{-1} )^{-1} .
$$
Since $J_{j_*} = Q_{j_*}^T A Q_{j_*}$ is symmetric and negative definite, and the shifts are all positive,
it follows that $H_{j_*}$ is a negative definite matrix. 
Moreover, the eigenvalues of $K_{j_*}$ are positive since $K_{j_*}=H_{j_*} J_{j_*}^{-1}$ is the product of two symmetric 
negative definite matrices. 

For any $j\le {j_*}$, 
the eigenvalues of $H_j$ are contained in the spectral interval of $H_{j_*}$, so that
$H_j$ is positive definite.  
%
%
To derive the positive definiteness of $\widetilde{T}_j$, we observe that 
for any $j\le {j_*}$,
the spectrum of $K_j$ is composed of $1$ and all the eigenvalues of the submatrix
$\widetilde{K}_j =
\widetilde{D}_{j-1}^{-1} \widetilde{T}_j$, with $j\le {j_*}$.
For $j=j_*$, the positivity of the eigenvalues of $K_{j_*}$ ensures that of the eigenvalues of $\widetilde{K}_{j_*}$.
In particular, since $\widetilde{D}_{j_*-1}>0$, this implies that $\widetilde{T}_{j_*}$ is 
positive definite. Hence, all principal $j\times j$ matrices of $\widetilde{T}_{j_*}$ are also positive definite,
with $j\le {j_*}$.}
\end{proof}


{\color{black}We can prove the backward stability of the Gaussian elimination 
procedure associated with $K_j$,
thus proving Proposition \ref{lemma:stabrec}.
Here $|M|$ is the matrix obtained by taking the element-wise absolute values of the matrix $M$.}

\begin{proposition}\label{lemma:stabLU}
 Under the assumptions of Lemma \ref{lemma:K}, if the unit roundoff {\tt u} is small enough, 
then the Gaussian elimination for the system $K_j y = e_j$ succeeds, and the 
computed solution $\hat y$ satisfies
 $$
 (K_j + \Delta K_j) \hat y = e_j, \quad |\Delta K_j| < h({\tt u}) |K_j|, \quad 
h({\tt u}) = \frac{4{\tt u} + 3{\tt u}^2+{\tt u}^3}{1-{\tt u}}. 
$$
The same holds for the system $K_j^T t = e_j$.
\end{proposition}

\begin{proof}

Following the argument in \cite[section 9.6]{HigBook02}, it is sufficient 
to prove that the LU factorization $K_j = L_j U_j$ satisfies $|L_j||U_j| = |L_j U_j|$ with 
$U_j$ having positive diagonal elements, and the result will follow
from \cite[Theorem 9.14]{HigBook02} and its proof.
{\color{black}
We restrict our attention to the matrix $\widetilde K_j$,
as the first row and column of $K_j$ are already in the desired form.

For $A$ negative definite, from Lemma~\ref{lemma:K} it follows that the 
matrix $\widetilde{T}_j=\widetilde{D}_{j-1} \widetilde{K}_j$ 
is positive definite. 
In particular, Theorem 9.12 in \cite{HigBook02} ensures that
the LU factorization $\widetilde{T}_j = \widetilde{L}_j \widetilde{U}_j$ satisfies the 
condition $|\widetilde{L}_j||\widetilde{U}_j| = |\widetilde{L}_j \widetilde{U}_j|$.
Therefore, the matrix $\widetilde{K}_j$ can be factorized as 
$\widetilde{K}_j = \widetilde{D}_{j-1}^{-1} \widetilde{L}_j \widetilde{U}_j$. 
Note that the matrix $\widehat L_j := \widetilde{D}_{j-1}^{-1} \widetilde{L}_j \widetilde D_{j-1}$ is lower 
bidiagonal with all the diagonal entries equal to $1$. Then
 $$ 
\widetilde{K}_j = \widehat L_j \widetilde{D}_{j-1}^{-1} \widetilde U_j = \widehat L_j \widehat U_j, 
$$
 is the unique LU factorization of $\widetilde{K}_j$, with $\widehat U_j := \widetilde{D}_{j-1}^{-1} \widetilde U_j$. 
Note that the diagonal elements of $\widehat U_j$ are positive.
Since $\widetilde{D}_{j-1}$ has positive diagonal entries, we get the following equalities
 \begin{align*}
    |\widehat L_j || \widehat U_j| & = |\widehat L_j| |\widetilde{D}_{j-1}^{-1} \widetilde U_j| = |\widehat L_j \widetilde{D}_{j-1}^{-1} ||\widetilde U_j| 
     = |\widetilde{D}_{j-1}^{-1} \widetilde{L}_j | |\widetilde U_j| = |\widetilde{D}_{j-1}^{-1} | | \widetilde{L}_j | |\widetilde U_j| \\
     & = |\widetilde{D}_{j-1}^{-1}| | \widetilde{L}_j \widetilde U_j| = |\widetilde{D}_{j-1}^{-1} \widetilde{L}_j \widetilde U_j| = |\widehat L_j \widehat U_j|.
 \end{align*}
Returning to $K_j$, and
 using the notation of the proof of Lemma \ref{lemma:thomas}, we observe that the first two computed 
coefficients in the factorization $K_j = L_j U_j$ are $\omega_1 = 1$ and $\ell_2 = \beta_1 / \xi_1 > 0$. 
Hence, it holds that $|L_j||U_j| = |L_j U_j|$ with $U_j$  having positive diagonal elements, 
concluding the proof. 

The case in which $A$ is positive definite can be proved analogously.
}
\end{proof}
}

\section{Conclusions}\label{Conclusion}
We have described a computationally and memory efficient implementation of the
symmetric rational Lanczos method. The algorithm does not require storing the
whole orthonormal basis $Q_m$ to proceed with the iterations. We have illustrated 
a number of application problems where the proposed $Q_m$-less algorithm can 
effectively be employed. Very preliminary considerations 
of finite precision arithmetic computations seem to indicate
that the behavior of the short-term recurrence rational method in this context
is similar to that 
of its polynomial counterpart, although a comprehensive analysis is required to make
more definitive statements.

\section*{Acknowledgements}
{The authors would like to thank Miroslav S. Prani{\'{c}} for an insightful discussion on
\cite{PraRei14}, and Niel Van Buggenhout for the helpful comments about the rational Krylov moment matching property.}
{\color{black} The authors are also grateful to the two anonymous reviewers for their
careful reading and for comments that led us to include section~\ref{sec:LUstab}.}

The authors are members of Indam-GNCS, which support is gratefully acknowledged. This work has also been supported by Charles University Research programs No. PRIMUS/21/SCI/009 and No. UNCE/SCI/023.

The datasets and algorithms generated during and/or analysed during the current study are available from the corresponding author on reasonable request.

\section*{Appendix}
In this section we present the block
variant of Algorithm~\ref{alg:rational_lanczos}.

\begin{algorithm}
{\tiny
\setcounter{AlgoLine}{0}
  \DontPrintSemicolon
  \SetKwInOut{Input}{input}\SetKwInOut{Output}{output}
  \Input{$A\in\mathbb{R}^{n\times n}$, $V\in\mathbb{R}^{n\times p}$, $\b{\xi}$, number of iterations $m>0$.}
  \Output{$J_{m}\in\mathbb{R}^{pm\times pm}$, $J_{m}=Q_m^TAQ_m$, $\text{Range}(Q_m)={\cal K}_m(A,V,\b{\xi}_m) $.}
  \BlankLine
  Compute a skinny QR factorization of $V$, $\widehat Q \widehat R=V$\;
  
  \While{$j\leq m$}{
  \If{$j=1$}{
  Set $\widetilde R=A\widehat Q$ and $\widetilde S=\widehat Q$\;
  }\ElseIf{$j=2$}{
  Set $\widetilde R=A\widehat Q-\bar Q\beta_{j-1}^T$ and $\widetilde S=(I-A/\xi_{j-1})\widehat Q$}
\Else{
Set $\widetilde R=A\widehat Q-(I-A/\xi_{j-2})\bar Q\beta_{j-1}^T$ and $\widetilde S=(I-A/\xi_{j-1})\widehat Q$}
    Solve  $(I-A/\xi_j)[R,S]=[\widetilde R,\widetilde S]$\;
  
  Compute $\alpha_j=\left(\widehat Q^TS\right)^{-1}\left( \widehat Q^TR\right)$\;
  
  Set $Q=R- S\alpha_j$\;
  
  Set $\bar Q=\widehat Q$\;
  
  Compute a skinny QR factorization of $Q$, $\widehat Q\beta_j=Q$\;
  
  \If{j=1}{
    Set $u_j=y_1=t_1=I_p$ and $\widehat y_1=\alpha_1$
    }\Else{
    Set $u_j=\alpha_j/\xi_{j-1}+I_p-\beta_{j-1}\omega_{j-1}^{-1}\beta_{j-1}^T/(\xi_{j-1}\xi_{j-2})$ \;
    
    Set
    $y_j=\begin{bmatrix}
        -y_{j-1}\beta_{j-1}^T\omega_j^{-1}
        /\xi_{j-2} \\
        \omega_j^{-1}\\
       \end{bmatrix}
    $,
    $t_j= \begin{bmatrix}
        -t_{j-1}\beta_{j-1
        }^T\omega_j^{-1}/\xi_{j-1} \\
        \omega_j^{-1}\\
       \end{bmatrix}$, and $\widehat y_j=\begin{bmatrix}
        -\widehat y_{j-1}\beta_{j-1}^T\omega_j^{-1}/       \xi_{j-2} \\
        \beta_{j-1}^TE_{j-1}^Ty_{j} +\alpha_j\omega_j^{-1}\\
       \end{bmatrix}+E_{j-1}\beta_{j-1}u_j^{-1}    $
    }
    Compute $\eta=\widehat Q^TA\widehat Q$\;
    
    Set $J_jE_j=\widehat y_j-t_j\beta_{j}^T(I_p -  \eta/\xi_j)\beta_{j}/\xi_ju_j^{-1}$ and $E_j^TJ_j=(J_jE_j)^T$\;
    
    Set $j=j+1$
  }
  \caption{Block rational Lanczos.\label{alg:block_rational_lanczos}}
}
\end{algorithm}

\bibliographystyle{siam}
\bibliography{mybib}

\end{document}